\newtheorem{Theorem}{Theorem}
\newtheorem{Corollary}{Corollary}
\newtheorem{Proposition}{Proposition}
\newtheorem{Lemma}{Lemma}
\newtheorem{Claim}{Claim}
\theoremstyle{Definition}
\newtheorem{Definition}{Definition}
\theoremstyle{Remark}
\newtheorem{Remark}{Remark}
\def\leaderfill{\leaders\hbox to .8em{\hss .\hss}\hfill}
\def\_#1{{\lower 0.7ex\hbox{}}_{#1}}
\def\esima{${}^{\text{\b a}}$}
\def\esimo{${}^{\text{\b o}}$}
\font\bigbf=cmbx10 scaled \magstep1
\def\A{{\mathcal{A}}}
\def\sa{{\mathcal{S}}}
\def\P{{\mathcal{P}}}
\def\C{{\mathcal{C}}}
\def\L{{\mathcal{L}}}
\def\G{{\mathcal{G}}}
\def\fa{{\mathcal{F}}}
\def\O{{\mathcal{O}}}
\def\eR{{\mathcal{R}}}
\def\M{{\mathcal{M}}}
\def\D{{\mathcal{D}}}
\def\U{{\mathcal{U}}}
\def\B{{\mathcal{B}}}
\def\E{{\mathcal{E}}}
\def\H{{\mathcal{H}}}
\def\po{{\partial}}
\def\ro{{\rho}}
\def\te{{\theta}}
\def\Te{{\Theta}}
\def\om{{\omega}}
\def\Om{{\Omega}}
\def\vr{{\varphi}}
\def\ga{{\gamma}}
\def\Ga{{\Gamma}}
\def\la{{\lambda}}
\def\La{{\Lambda}}
\def\ov{\overline}
\def\al{{\alpha}}
\def\ve{{\varepsilon}}
\def\lg{{\langle}}
\def\rg{{\rangle}}
\def\lv{{\left\vert}}
\def\rv{{\right\vert}}
\def\be{{\beta}}
\def\bh{{\mathbb{H}}}
\def\bp{{\mathbb{P}}}
\def\ee{{\mathbb{E}}}
\def\re{{\mathbb{R}}}
\def\bz{{\mathbb{Z}}}
\def\bq{{\mathbb{Q}}}
\def\bd{{\mathbb{D}}}
\def\bc{{\mathbb{C}}}
\def\bn{{\mathbb{N}}}
\def\bk{{\mathbb{K}}}
\def\Sep{\operatorname{{Sep}}}
\def\Re{\operatorname{{Re}}}
\def\Mon{\operatorname{{Mon}}}
\def\SL{\operatorname{{SL}}}
\def\Res{\operatorname{{Res}}}
\def\Fol{\operatorname{{Fol}}}
\def\tr{\operatorname{{tr}}}
\def\dim{\operatorname{{dim}}}
\def\Aut{\operatorname{{Aut}}}
\def\GL{\operatorname{{GL}}}
\def\Aff{\operatorname{{Aff}}}
\def\Hol{\operatorname{{Hol}}}
\def\loc{\operatorname{{loc}}}
\def\Inv{\operatorname{{Inv}}}
\def\Ker{\operatorname{{Ker}}}
\def\mI{\operatorname{{Im}}}
\def\Dom{\operatorname{{Dom}}}
\def\Id{\operatorname{{Id}}}
\def\Tni{\operatorname{{Int}}}
\def\supp{\operatorname{{supp}}}
\def\Diff{\operatorname{{Diff}}}
\def\sing{\operatorname{{sing}}}
\def\Sing{\operatorname{{sing}}}
\def\codim{\operatorname{{codim}}}
\def\grad{\operatorname{{grad}}}
\def\Ind{\operatorname{{Ind}}}
\def\deg{\operatorname{{deg}}}
\def\rank{\operatorname{{rank}}}
\def\sep{\operatorname{{Sep}}}
\def\Sat{\operatorname{{Sat}}}
\def\signal{\operatorname{{signal}}}
\title{Extension theorems for analytic objects associated to  foliations}
\author{César Camacho and Bruno Sc\'ardua}
\date{}
\begin{document}

\maketitle

\begin{abstract}
In this paper we will establish a structure theorem concerning the
extension of analytic objects associated to germs of dimension one
foliations on surfaces, through one-dimensional barriers. As an application, an
extension theorem for projective transverse structures is obtained.

\end{abstract}


\def\leaderfill{\leaders\hbox to .8em{\hss .\hss}\hfill}
\def\_#1{{\lower 0.7ex\hbox{}}_{#1}}
\def\esima{${}^{\text{\b a}}$}
\def\esimo{${}^{\text{\b o}}$}
\font\bigbf=cmbx10 scaled \magstep1

\def\A{{\mathcal{A}}}
\def\sa{{\mathcal{S}}}
\def\P{{\mathcal{P}}}
\def\C{{\mathcal{C}}}
\def\L{{\mathcal{L}}}
\def\G{{\mathcal{G}}}
\def\fa{{\mathcal{F}}}
\def\O{{\mathcal{O}}}
\def\eR{{\mathcal{R}}}
\def\M{{\mathcal{M}}}
\def\D{{\mathcal{D}}}
\def\U{{\mathcal{U}}}
\def\B{{\mathcal{B}}}
\def\E{{\mathcal{E}}}
\def\T{{\mathcal{T}}}

\def\po{{\partial}}
\def\ro{{\rho}}
\def\te{{\theta}}
\def\Te{{\Theta}}
\def\om{{\omega}}
\def\Om{{\Omega}}
\def\vr{{\varphi}}
\def\ga{{\gamma}}
\def\Ga{{\Gamma}}
\def\la{{\lambda}}
\def\La{{\Lambda}}
\def\ov{\overline}
\def\al{{\alpha}}
\def\ve{{\varepsilon}}
\def\lg{{\langle}}
\def\rg{{\rangle}}
\def\lv{{\left\vert}}
\def\rv{{\right\vert}}
\def\be{{\beta}}

\def\bh{{\mathbb{H}}}
\def\bp{{\mathbb{P}}}
\def\ee{{\mathbb{E}}}
\def\re{{\mathbb{R}}}
\def\bz{{\mathbb{Z}}}
\def\bq{{\mathbb{Q}}}
\def\bd{{\mathbb{D}}}
\def\bc{{\mathbb{C}}}
\def\bn{{\mathbb{N}}}
\def\bk{{\mathbb{K}}}

\def\Re{\operatorname{{Re}}}
\def\SL{\operatorname{{SL}}}
\def\Res{\operatorname{{Res}}}
\def\Fol{\operatorname{{Fol}}}
\def\tr{\operatorname{{tr}}}
\def\dim{\operatorname{{dim}}}
\def\Aut{\operatorname{{Aut}}}
\def\GL{\operatorname{{GL}}}
\def\Aff{\operatorname{{Aff}}}
\def\Hol{\operatorname{{Hol}}}
\def\loc{\operatorname{{loc}}}
\def\Ker{\operatorname{{Ker}}}
\def\mI{\operatorname{{Im}}}
\def\Dom{\operatorname{{Dom}}}
\def\Id{\operatorname{{Id}}}
\def\Tni{\operatorname{{Int}}}
\def\supp{\operatorname{{supp}}}
\def\Diff{\operatorname{{Diff}}}
\def\sing{\operatorname{{sing}}}
\def\Sing{\operatorname{{sing}}}
\def\codim{\operatorname{{codim}}}
\def\grad{\operatorname{{grad}}}
\def\Ind{\operatorname{{Ind}}}
\def\deg{\operatorname{{deg}}}
\def\rank{\operatorname{{rank}}}
\def\sep{\operatorname{{sep}}}
\def\Sat{\operatorname{{Sat}}}
\def\sep{\operatorname{{sep}}}
\def\signal{\operatorname{{signal}}}
\def\ord{\operatorname{{ord}}}

\section{Introduction}

A regular one-dimensional foliation on a complex surface is given by
an atlas of distinguished neighborhoods $\{U_j\}$, $j\in J$,
covering the manifold, and for each $j\in J$ by a submersion $y_j
\colon U_j \to \bc$ defining the foliation, such that on each
nonempty intersection $U_i \cap U_j \ne \emptyset$ we have $
dy_i=g_{ij}\, dy_j$ where $g_{ij}\in\mathcal O^*(U_i\cap \textsc{U}_j)$
is a not vanishing holomorphic function defined on $U_i\cap U_j$. A
complex one dimensional foliation with isolated singularities on a
complex surface $M$ is a regular foliation of $M\setminus S$, where
$S$ is a discrete set of points of $M$. Each element of $S$ is
called an isolated $\it{singularity}$ of the foliation. An
elementary application of Hartog's extension theorem (\cite{Gunning})
shows that in the
neighborhood of each singularity the foliation can be defined by a
holomorphic one-form. We assume that the one-form vanishes at the
singularity, otherwise the foliation would have a regular extension.
Thus a foliation with a discrete set of singularities on a complex
manifold $M$ can be defined by an atlas $\{U_j\}$, $j\in J$,
covering $M$ and for each $j\in J$ a holomorphic one-form $\omega_j$
defining the foliation on $U_j$, such that on each nonempty
intersection $U_i \cap U_j \ne \emptyset$ we have $\large
\omega_i=g_{ij}.\omega_j$ where $g_{ij}\in\mathcal O^*(V_i\cap
\textsc{U}_j)$ is a not vanishing holomorphic function defined on
$U_i\cap U_j\ne \emptyset$. Whenever the set $S$ has cardinality
greater than one, we say that we are dealing with a global
foliation. A simple example of a global foliation is obtained by
blowing-up an isolated singular point $ 0\in \bc^2$ of a foliation
$\fa$ defined in a neighborhood $0\in U\subset \bc^2$ by a
holomorphic one-form $\omega$, vanishing only at $ 0\in\bc^2$. Let
$(x,y)$ be coordinates of $\bc^2$ restricted to $U$. Define a
complex 2-manifold $\mathcal U$ by glueing two charts defined by the
coordinates: $ U_1=(x,t)$, $ U_2=(u,y)$ such that $(x,y)\in U$
$u,t\in \bc$, $y=t.x$, $u.t=1$. The map $\pi_0: \mathcal U\to U$
defined on these charts by $\pi_0(x,t)=(x,tx)$, $\pi_0(u,y)=(uy,y)$
is a proper holomorphic map, $D_0=\pi_0^{-1}(0)$ is the exceptional
divisor, isomorphic to an embedded projective line, and $\pi_0:
\mathcal U\setminus D_0 \to U\setminus \{0\}$ is a biholomorphism.
On these charts $\pi_0^*(\omega)=x^\nu.\omega_1$,
$\pi_0^*(\omega)=y^\nu.\omega_2$, where $\nu$ is a positive integer,
depending on the algebraic multiplicity of the singularity, and
$\omega_1$, $\omega_2$ are holomorphic 1-forms with isolated
singularities. Then, the 1-forms $\omega_1$, $\omega_2$ satisfy
$\omega_1=g_{12}.\omega_2$, $g_{12}\in \mathcal O^*(U_1\cap U_2)$
and define a foliation $\fa_0$ on $\mathcal U$ called the ${\it
analytic}$ $\it{ extension}$ of $\pi^*\fa$ on $\mathcal U\setminus
D$ to $\mathcal U$.

We have two possibilities. Either $D_0$ is tangent to $\fa_0$, {\it i.e.}, $D_0$
is a leaf plus a finite number of singularities, and in this case
we say that $D_0$ is $\it nondicritical$, or $D_0$ is transverse
to $\fa_0$ everywhere except at a finite number of points that can be either
singularities or tangency points of $\fa_0$ with $D_0$. In this last case we say
that $D_0$ is $\it dicritical$.

This process can be repeated at each one of the singularities, or
tangency points of $\fa_0$ with $D_0$. Seidenberg \cite{Seidenberg}
states that by composition of a finite number of these blow-up's we
can obtain a proper holomorphic map $\pi: \tilde U \to U$ such that
$\pi^{-1}(0)= \cup_{j=0}^m D_j$ is a finite union of embedded
projective lines with normal crossings, called the {\it exceptional
divisor}. This map is called the {\it resolution morphism} of $\fa$.
 Any component $D_j$ is either invariant or everywhere
transverse to the pull back foliation $\tilde\fa= \pi^*(\fa)$. Any
singular point of $\tilde\fa$ will be {\it irreducible} in the following
sense.

  Let $\omega= a(x,y) dx+ b(x,y) dy$ be a holomorphic one-form defined
in a neighborhood of $0\in \bc^2$. We say that $0\in \bc^2$ is a
$\it singular$ point of $\omega$ if $a(0,0)=b(0,0)=0$, and  a $\it
regular$ point otherwise. The vector field $X=(-b(x,y),a(x,y))$ is
in the kernel of $\omega$. The nonsingular orbits of $X$ are the
leaves of the foliation.

We say that $0\in \bc^2$ is an
$\it{irreducible}$ singular point of
$\omega$ if the eigenvalues
$\lambda_1, \lambda_2$ of the linear
part of $X$ at $0\in \bc^2$ satisfy one of the following conditions:

(1) $\lambda_1.\lambda_2\neq 0$ and $\lambda_1/\lambda_2\notin
{\bq_+}$

(2) either $\lambda_1\neq 0$ and $\lambda_2= 0$, or viceversa.

In case (1) there are two invariant curves tangent to the
eigenvectors corresponding to
$\lambda_1$ and $\lambda_2$. In case (2) there is
an invariant curve tangent at $0\in\bc^2$ to the eigenspace
corresponding to $\lambda_1$. These curves are called
$\it{separatrices}$ of the foliation.

Suppose that $0\in \bc^2$ is either a regular point or an irreducible
singularity of a foliation $\mathcal I$.
It is possible to show that in suitable local coordinates $(x,y)$ in a
neighborhood
$0\in U \in \bc^2$
of the origin, we have the following local normal forms for the one-forms
defining this foliation:
\begin{itemize}

\item [ (Reg)] $dx=0$, whenever $0\in \bc^2$ is a regular point of $\mathcal I$.

and if $0\in\bc^2$ is an irreducible singularity of $\tilde\fa$,
then either

\item[(Irr.1)] $xdy - \la ydx + \eta_2(x,y) = 0$ where $\la \in
\bc\backslash\bq_+$\,, \, $\eta_2(x,y)$ is a holomorphic one-form
with a zero of order $\ge 2$ at $(0,0)$. This is called {\it
nondegenerate singularity}. Such a  singularity is {\it resonant} if
$\lambda\in \mathbb Q_-$ and {\it hyperbolic} if $\lambda \notin
\mathbb R$, or

\item[(Irr.2)]
 $y^{t+1}dx - [x(1+\la y^t) + A(x,y)]dy = 0$\, , where
$\la \in \bc$, \, $t \in \bn = \{1,2,3,\dots\}$ and $A(x,y)$ is a
holomorphic function with a zero  of order $\ge t+2$ at $(0,0)$.
This is called {\it saddle-node singularity}. The {\it strong
separatrix} of the saddle-node is given by  $\{y=0\}$. If the
singularity admits another separatrix then it is necessarily smooth
and transverse to the strong separatrix, it can be taken as the other
coordinate axis and will be called {\it central} manifold of the
saddle-node.
\end{itemize}

 In the last two cases we have
$\{y=0\} \subset \sep(\mathcal I,U) \subset \{xy=0\}$, where $\sep(\mathcal I,U)$
denotes the union of separatrices of $\mathcal I$ through $0\in\bc^2$.

 A {\it fundamental domain} of $\mathcal I|_U$ at $0\in \bc^2$ is a
subset $\mathcal D\subset \bc^2$ consisting of:

\begin{itemize}
\item[{\rm(E.0)}] In the regular case: a neighborhood of
$0\in\bc^2$ minus a codimension one submanifold passing through
$0\in\bc^2$, transverse to the foliation.

\item[{\rm(E.1)}] A neighborhood of the singularity minus its separatrices in case the singularity is
nondegenerate nonresonant.

\item[{\rm(E.2)}] A neighborhood of the singularity minus its separatrices,  union a neighborhood
of an annulus around $0\in \bc^2$ contained in one of the separatrices, in case the
singularity is resonant.

\item[{\rm(E.3)}] A neighborhood of the singularity minus its separatrices,  union a neighborhood
of an annulus around $0\in \bc^2$  contained in the strong separatrix,  in case the
singularity is a saddle-node.

\end{itemize}

Conditions (E.2) and (E.3) are related to the fact that, in the resonant
case and in the saddle-node case, the holonomy of the mentioned
separatrix characterizes the analytical type of the foliation (cf.
\cite{Martinet-Ramis1}, \cite{Martinet-Ramis2}).

\subsection{The Globalization Theorem}

Consider now an arbitrary germ of an analytic foliation $\fa$ at an
isolated singularity $0\in \bc^2$ with resolution morphism $\pi\colon \tilde U \to U$.
A {\it separatrix} of $\fa$ at $0\in
\bc^2$ is the germ at $0\in \bc^2$ of an irreducible analytic curve
which is invariant by $\fa$. It follows from the resolution theorem
that for a small enough neighborhood $0\in U\subset\bc^2$ any
separatrix of $\fa$ can be represented in $U$ by an irreducible analytic
curve passing through $0\in\bc^2$ which is the closure of a leaf of
$\fa|_U$. We will write $\sep(\fa,U)$ to denote the union of these
separatrices. By Newton-Puiseux parametrization theorem, if $U$ is
small enough, there is an analytic injective map $f \colon \mathbb D
\to U$ from the unit disk $\mathbb D \subset \mathbb C$ onto the
separatrix, mapping the origin to $0\in \bc^2$, and nonsingular
outside the origin $0 \in \mathbb D$. Therefore a separatrix locally
has the topology of a punctured disk. We shall say that the
separatrix is {\it resonant} if for any loop in the punctured disk
that represents a generator of the homotopy of the leaf, the
corresponding holonomy map is a resonant diffeomorphism. Choose a
holomorphic vector field $X$ which generates the foliation $\fa|_U$,
and has an isolated singularity at $0\in \bc^2$. Then, the
separatrix is called {\it resonant} if the loop $\gamma$ generating the homotopy
of the leaf in the separatrix satisfies $\exp \int_\gamma \tr(DX)$
is a root of the unity.

The main concept we introduce is the following:

 \begin{Definition}
 {\rm A {\it fundamental domain} of $\fa|_U$ at $0\in\bc^2$ is a
subset $\mathcal D\subset U$ which is :

\begin{itemize}

\item[{\rm(i)}]  A fundamental domain of $\fa|_U$ at $0\in\bc^2$  whenever
$0\in\bc^2$ is either a regular point or an irreducible singularity,
and

\item[{\rm(ii)}] In case $0\in\bc^2$ is a not irreducible singularity, a subset
$\mathcal D\subset\bc^2$ written as $\mathcal D= (U\setminus
\sep(\fa,U))\cup \mathcal S$, where $\mathcal S\subset U$ is the
union of ring neighborhoods of loops $\gamma$, one for each resonant
separatrix.
\end{itemize}
}\end{Definition}

It is important to remark that the pull-back of a fundamental domain
by the resolution morphism, is a fundamental domain for some
singularities of $\tilde \fa$, but not necessarily for all of them.
This is the case, for instance, for saddle-nodes with strong manifold
tangent to the resolution divisor.

  Let $U$ be a neighborhood of $0\in\bc^2$, as above. A meromorphic $q$-form $\xi$
  defined on $U\setminus \sep(\fa,U)$ is called
  {\it extensible with respect  to} $\fa|_U$ in $U$
  if any extension of $\xi$ to a fundamental domain of $\fa|_U$
  extends as a meromorphic $q$-form to $U$.
  We will say also that $\xi$ is {\it extensible with respect to the germ} $\fa$ at
 $0\in\bc^2$ if it is extensible with respect to $\fa|_U$ in some neighborhood
  $0\in U\subset \bc^2$.

In general it is a not trivial task to prove that a $q$-form is
extensible with respect  to a local foliation. We show in section 4
that one-forms associated to projective transverse structures of a
foliation $\mathcal I$ are extensible with respect to $\mathcal I$.

  Let $U$ be a neighborhood of $0\in\bc^2$, as above. A meromorphic $q$-form $\xi$
  defined on $U\setminus \sep(\fa,U)$ is called {\it  infinitesimally extensible with
  respect  to} $\fa|_U$ in $U$ if $\tilde\xi:=\pi^*\xi$
 is  an extensible $q$-form with respect to $\tilde\fa|_{\pi^{-1}(U)}$ at a generic point
 on each dicritical component and in a neighborhood of each irreducible singular point of $\tilde\fa$.

A natural question is to find extension theorems for general germs
of foliations. We will show next that for any germ of a foliation it
is enough to check extensibility at the irreducible singularities
produced in the process of desingularization.

\begin {Theorem}[Globalization theorem]
  \label{Theorem:Globalization}
 Let $\fa$ be the germ of a holomorphic foliation with an isolated
 singularity at $0\in \bc^2$. For a small enough neighborhood
 $0\in U\subset \bc^2$
 any meromorphic $q$-form infinitesimally extensible with respect to
 $\fa$ in $U$ is extensible.
 \end{Theorem}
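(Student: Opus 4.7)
The plan is to work on the resolution $\pi \colon \tilde U \to U$ of $\fa$ and extend $\tilde\xi := \pi^{*}\xi$ meromorphically across the exceptional divisor $\tilde D = \pi^{-1}(0)$; once this is achieved, the properness of $\pi$ and the fact that $\pi$ is a biholomorphism on $\tilde U \setminus \tilde D$ give a meromorphic form on $U\setminus\{0\}$, and the Riemann--Hartogs extension theorem in dimension two crosses the single point $0$ to produce the meromorphic extension of $\xi$ on $U$. Initially $\tilde\xi$ is defined on $\pi^{-1}(U\setminus\sep(\fa,U))$ together with the lifts of the rings in $\mathcal S$ along the strict transforms of the resonant separatrices of $\fa$. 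The extension across $\tilde D$ proceeds in three local stages: (a) regular points of $\tilde\fa$ on dicritical components; (b) irreducible singular points of $\tilde\fa$; (c) regular points of $\tilde\fa$ on non-dicritical components.

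For stage (a), at a regular point $q$ of $\tilde\fa$ on a dicritical $D_j$ the foliation is transverse to $D_j$, so in local coordinates $D_j=\{x=0\}$ and $\tilde\fa=\{dy=0\}$; the fundamental domain (E.0) of $\tilde\fa$ at $q$ is a neighborhood of $q$ minus $D_j$, which already lies in the current domain of $\tilde\xi$. The infinitesimal extensibility hypothesis supplies a meromorphic extension at a generic $q \in D_j$, and propagation along $D_j \simeq \bp^1$ minus a finite set of singular points of $\tilde\fa$, via the identity principle for meromorphic forms, extends $\tilde\xi$ to a neighborhood of every regular point of $\tilde\fa$ on every dicritical component.

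For stage (b), at an irreducible singular point $p$ of $\tilde\fa$ the local separatrices are either branches of $\tilde D$ or strict transforms of separatrices of $\fa$, so after stage (a), $\tilde\xi$ is defined on a neighborhood of $p$ minus these local separatrices, i.e., on the core of a fundamental domain of type (E.1). For types (E.2) and (E.3) a ring neighborhood of a loop in a resonant or strong separatrix at $p$ must also be supplied: when that separatrix is a strict transform of a (resonant) separatrix of $\fa$, the corresponding ring in $\mathcal S$ lifts to it directly, since resonance is preserved under $\pi$; when it is a local branch of a non-dicritical component $D_k$, the ring must be produced by bootstrapping from extensions already obtained elsewhere on $D_k$. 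Once a full fundamental domain of $\tilde\fa$ at $p$ is available, the hypothesis of infinitesimal extensibility extends $\tilde\xi$ meromorphically to a neighborhood of $p$.

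For stage (c), $\tilde\xi$ is now meromorphic near every singular point of $\tilde\fa$ and across every dicritical component. Each non-dicritical $D_j$ meets the rest of the exceptional configuration or the strict transforms at finitely many points where $\tilde\xi$ has already been extended, so by Levi's extension theorem applied in transverse coordinates, combined with the Riemann--Hartogs theorem on codimension-two removable singularities, $\tilde\xi$ extends across all of $D_j$; pushing down then finishes the argument. The hardest step is the subcase of (b) in which the resonant or strong separatrix at an irreducible singularity of $\tilde\fa$ is a local branch of a non-dicritical exceptional component rather than a strict transform of a separatrix of $\fa$: this is exactly the situation flagged in the Remark above, in which the pull-back of the fundamental domain of $\fa$ at $0$ fails to be a fundamental domain of $\tilde\fa$ at $p$, and producing the missing ring neighborhood requires a careful interleaving of stages (b) and (c) and constitutes the technical heart of the proof.
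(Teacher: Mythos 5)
Your overall strategy --- pull back to the resolution, extend $\tilde\xi$ across the exceptional divisor locally, then push down and use Hartogs at the origin --- matches the paper's framework, and you have correctly identified the crux: at an irreducible singularity of $\tilde\fa$ sitting on a non-dicritical exceptional component, the pulled-back fundamental domain is missing the ring neighborhood required by cases (E.2) and (E.3), so the infinitesimal extensibility hypothesis cannot be applied directly. But your proposal stops exactly there: you say the missing ring ``must be produced by bootstrapping from extensions already obtained elsewhere on $D_k$'' and that this ``requires a careful interleaving'' --- without supplying the mechanism. This is a genuine gap, because the bootstrapping is circular as stated (to extend at the corner you need the form defined on an open piece of $D_k$; to get such a piece you need to have crossed $D_k$ somewhere, which requires a point of $D_k$ where a full fundamental domain is already available), and for a single linear chain the desired extension can in fact \emph{fail}: the paper's Proposition~\ref{Proposition:locallyextensible} only proves the dichotomy ``either $\tilde\xi$ extends to a neighborhood of the chain minus the positive-index points, \emph{or} the chain is minimal.''

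The missing ingredient is the Camacho--Sad index theorem, which is what breaks the circularity. The paper's argument runs by induction on the length of each linear chain: if the chain is not minimal, the index relations at the corners force $\sum_p i_p(P_l)\ne -k_l$ to fail in a way that, by the index theorem $\sum_p i_p(\fa,P_l)=P_l\cdot P_l=-k_l$, guarantees a singular point $p$ on $P_l$ away from the corners with $i_p(P_l)\ne 0$ and not positive; at such a $p$ there is a separatrix transverse to $P_l$ along which $\tilde\xi$ is already defined in a fundamental domain (or extensibility applies directly), and Levi's theorem then propagates the extension along $P_l$ up to the corner, producing precisely the missing ring. The residual case of minimal chains is then eliminated globally: the index at the origin of a minimal chain, measured before the chain was created, is non-negative, and since $D_0\cdot D_0=-1$ the index theorem forces a point $s\in D_0$ outside all chain origins with $i_s(D_0)\ne 0$, from which the extension propagates to the whole divisor. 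None of this combinatorial and index-theoretic machinery appears in your proposal, and without it the claimed interleaving of your stages (b) and (c) is an assertion rather than a proof.
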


 \section{Resolution of singularities}

\subsection{The Index theorem}
Let $\sigma$ be a Riemann surface embedded in a two dimensional
manifold $S$ ; $\fa$ a foliation on $S$ which leaves $\sigma$
invariant and $q\in \sigma$. There is a neighborhood of $q$ where
$\sigma$ can be expressed by $(f=0)$ and $\fa$ is induced by the
holomorphic 1-form $\omega$ written as
$\omega=hdf+f\eta$.
Then we can associate the following index:
$$
i_{q}(\fa, \sigma):=-{\rm Residue}_q(\frac{\eta}{h})|_{\sigma}
$$
relative to the invariant submanifold $\sigma$. A nondegenerate singularity
in the form (Irr.1) has two
invariant manifolds crossing normally, they correspond to the $x$
and $y$-axes. In this case if $\sigma$ is locally $(y=0)$ and $q=0$, this index is equal
to $\lambda$
(quotient of eigenvalues). The saddle-node in (Irr.2) has an invariant manifold
corresponding to the $x$-axis and, depending on the higher order terms,
it may not have another invariant curve (see \cite{Martinet-Ramis2}).
In the case of a saddle-node, if
$ \sigma $ is equal to $(x=0)$ and $q=0$, this index is $\lambda$, and
if $\sigma$ is equal to $(y=0)$ and $q=0$, this index is zero.
At a regular point $q$ of $\fa$ the index is zero.
The index theorem of \cite{Camacho-Sad} asserts that the sum of
all the indices at the points in $\sigma$ is equal to the
self-intersection number $\sigma\cdot\sigma$:
$$
\sum_{q\in{\sigma}} i_q(\fa,\sigma)=\sigma\cdot\sigma.
$$
\subsection{Resolution of singularities: linear chains}

Suppose $\fa$ is a complex one-dimensional foliation defined
on an open neighborhood $0\in U\subset \bc^2$.
The resolution process of $\fa$ at $0\in \bc^2$ can be described and
ordered as follows.
The blow-up of $\fa$ at $0\in \bc^2$ is $(U_0, \pi_0, D_0, \fa_0)$
where $\pi_0: U_0\to U$ is the usual blow-up map ( see § 1). Then,
$U_0$ is a complex 2-manifold,
$D_0=\pi_0^{-1}(0)\subset U_0$
is an embedded projective line called the
$\it{exceptional}$ $\it{  divisor}$,
and the restriction of the map $\pi_0$ to $U_0\setminus D_0$
is a biholomorphism from
$ U_0\setminus D_0$ to $U\setminus \{0\}$.
Moreover $\fa_0$ is the analytic foliation on $U_0$ obtained by extension
to $D_0$ of $(\pi_0|_{U_0\setminus D})^*\fa$, as defined in the Introduction.
We also observe that the Chern class of the normal bundle to
$D_0\subset U_0$ is $-1$.
We have two possibilities. Either $D_0$ is tangent to $\fa_0$, i.e. $D_0$
is a leaf plus a finite number of singularities, and in this case
we say that $D_0$ is $\it nondicritical$, or $D_0$ is transverse
to $\fa_0$ everywhere except at a finite number of points that can be either
singularities or tangency points of $\fa_0$ with $D_0$. In this last case we say
that $D_0$ is $\it dicritical$.

Proceeding by induction we define the
step $\underbar 0$ as the first blow-up $(U_0, \pi_0, D_0, \fa_0)$.
We assume that $(U_k, \pi_k, D_k, \fa_k)$ has been already defined, where
$\pi_k: U_k\to U$ is a holomorphic map, such that $D_k=\pi_k^{-1}(0)$ is
a divisor, union of a finite number of embedded projective lines with
normal crossing. The crossing points of $D_k$ are called $\it corners$.
The restriction of $\pi_k$ to $ U_k\setminus D_k$ is
a biholomorphism from $U_k\setminus D_k$ to $U\setminus \{0\}$. The
foliation $\fa_k$ on $U_k$ is the analytic extension to $D_k$ of the
foliation $(\pi_k|_{U_k\setminus D_k})^*\fa$.

Let $p_0: \tilde U_k \to U_k$ be the blow-up at a point $r\in D_k$, outside the
corners.
Let $P_0= p_0^{-1}(r)$ be the exceptional divisor. We write
$\tilde D_k=\ov{ p_0^{-1}(D_k\setminus \{r\})}$ and
$\tilde r= P_0\cap \tilde D_k$.
If $P$ is the irreducible component of $D_k$ containing $r$ we will denote by
$\tilde P = \ov{p_0^{-1}(P\setminus\{r\})}$. Then it is easy to see (\cite{Camacho-Sad}) that
$i_{\tilde r}(\tilde P)= i_r(P) - 1$.
 Using the fact that the restriction of $p_0$ to $\tilde U_k\setminus P_0$
 is a biholomorphism onto $U_k\setminus \{r\}$ we will say that $r$ becomes
 $\tilde r$ after one blow-up and also simplify notations
 identifying  $\tilde D_k$ with $D_k$, $\tilde P$
with $P$ and $\tilde r$ with $r$. Thus in the new notation,
$(\pi_k\circ p_0)^{-1}(0)= D_k\cup P_0$ and we will say that
$r= P_0\cap D_k$ was blown-up once.

We proceed to define $(U_{k+1}, \pi_{k+1}, D_{k+1}, \fa_{k+1})$ as follows.
Let $\tau_k\subset D_k$ be the set of points outside the corners
of $D_k$, that are either tangency points of $\fa_k$
with $D_k$ or not irreducible singular points of $\fa_k$. Let $r\in\tau_k$.
We introduce at $r$ a $\it linear$ $\it chain$ $\mathcal C(r)$ {\it with origin at}
$r\in D_k$, by
means of a sequence of blow-up's, first at the point $r$, the precise number of
times necessary to become either irreducible, or regular and then at any
reducible corner produced in this way. The resolution theorem of
Seidenberg \cite{Seidenberg} guarantees that after a finite number of blow-up´s
all corners obtained in this process will be either irreducible singular
points or regular points.

The linear chain $\mathcal C(r)$ can be seen as an ordered finite
sequence of embedded projective lines: $P_m> P_{m-1}> ...> P_1$
where $r=D_k\cap P_m$ and if $i>j$ and $P_i\cap P_j\neq\emptyset$
then $i=j+1$ and $P_{j}\cap P_{j+1}$ is just one point. For any
$l=1,..., m-1$ write $r_l=P_l\cap P_{l+1}$. Two invariants can be
associated to $\mathcal C(r)$. One is the {\it order} $n_r$ of $\mathcal C(r)$
defined as the the minimun number of times that was necessary to blow-up
$r$ in order to become
irreducible, and the $\it {length}$  $m$  of the linear chain. Given
any number $ 1\leq t < m$ we will say that the sequence
$P_t> P_{t-1}> ...> P_1$
is a linear chain $\mathcal C(r_t)$ of length $t$ with origin at
$r_t\in P_t\cap P_{t+1}$. We will write $|\mathcal C(r)|=\cup _{j=1}^m P_j$
to denote the $\it{support}$ of the chain $\mathcal C(r)$.

Let $-k_l=P_l.P_l$ be the self intersection number of $P_l$ in the
linear chain  $\mathcal C(r)$. The sequence of numbers $n_r.k_m....k_1$
belongs to the collection $\mathcal A$ of numbers defined as follows.
Start with $1.1\in\mathcal A$ and assume that $a_0.a_t.a_{t-1}....a_1$
belongs to $\mathcal A$. Then $(a_0+1.1.(a_t+1).a_{t-1}....a_1\in \mathcal A$,
and $a_0.a_t...(a_{j+1}+1).1.(a_j+1)....a_1\in\mathcal A$.

\begin{Lemma}[\cite{Camacho-Sad}]

Suppose that $a_0.a_t.a_{t-1}....a_1\in\mathcal A$. Then

$$
a_0=[a_t,a_{t-1},...,a_2,a_1]:=\frac{1}{a_t - \frac{1}{a_{t-1} -
\frac{1}{_{\ddots \, \, \, \, \frac{1}{a_2 - \frac{1}{a_1}}}}}}
$$

\end{Lemma}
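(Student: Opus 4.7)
The plan is to prove the lemma by structural induction on the way an element of $\A$ is produced by the two generation rules. The base case $1.1$ is immediate, since by definition $[1]=1/1=1$. To make the inductive step transparent I would abbreviate $x_l:=[a_l,a_{l-1},\ldots,a_1]$ for $1\le l\le t$, so that $x_1=1/a_1$ and $x_l=\dfrac{1}{a_l-x_{l-1}}$ for $l\ge 2$. The claim becomes $a_0=x_t$, and the inductive hypothesis rewrites as $x_{t-1}=a_t-1/a_0$. The task is then to verify that each generation rule preserves the identity $a_0'=x_{t+1}'$.

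For Rule~1, which produces $(a_0+1).1.(a_t+1).a_{t-1}.\ldots.a_1$, I would note that $x_l'=x_l$ for $l\le t-1$, and compute directly
\[
x_t'=\frac{1}{(a_t+1)-x_{t-1}}=\frac{1}{1+1/a_0}=\frac{a_0}{a_0+1},\qquad x_{t+1}'=\frac{1}{1-x_t'}=a_0+1,
\]
which is exactly the new $a_0'$. For Rule~2, which inserts the block $(a_{j+1}+1).1.(a_j+1)$ in the middle of the sequence, the core observation is a three-term telescoping at positions $j,j+1,j+2$: the local perturbations cancel and the partial evaluations are merely shifted by one index.

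Concretely, $x_l'=x_l$ for $l\le j-1$, and then
\[
x_j'=\frac{x_j}{1+x_j},\qquad x_{j+1}'=\frac{1}{1-x_j'}=1+x_j,\qquad x_{j+2}'=\frac{1}{(a_{j+1}+1)-(1+x_j)}=\frac{1}{a_{j+1}-x_j}=x_{j+1}.
\]
A secondary induction on $l\ge j+2$ using $x_{l+1}'=1/(a_l-x_{l-1})=x_l$ then gives $x_l'=x_{l-1}$ for every $l\ge j+2$, so in particular $x_{t+1}'=x_t=a_0=a_0'$, and the induction closes. The main obstacle is precisely this telescoping identity in Rule~2: one has to see that inserting a $1$ between two incremented entries is a value-preserving operation on the continued fraction whose only effect on the partial evaluations $x_l$ is an index shift. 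Once this is recognised, both inductive steps reduce to routine continued-fraction algebra.
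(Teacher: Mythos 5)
Your proof is correct. Note first that the paper itself gives no argument for this lemma: it is quoted from \cite{Camacho-Sad} without proof, so there is nothing internal to compare against; your structural induction on the two generation rules for $\mathcal A$ is in fact the natural (and essentially the original) argument. The bookkeeping checks out: for Rule~1 the computation $x_t'=\frac{1}{(a_t+1)-x_{t-1}}=\frac{a_0}{a_0+1}$ and $x_{t+1}'=a_0+1$ is exactly right, and for Rule~2 the telescoping $x_j'=\frac{x_j}{1+x_j}$, $x_{j+1}'=1+x_j$, $x_{j+2}'=x_{j+1}$ followed by the index shift $x_l'=x_{l-1}$ for $l\ge j+2$ closes the induction (including the boundary cases $j=1$, with the convention $x_0=0$, and $j=t-1$, where the secondary induction range is empty). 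The only point you leave implicit is that none of the denominators $a_l-x_{l-1}$ vanishes, so that the partial evaluations $x_l$ are actually defined; this is harmless here because one can carry the positivity $x_l>0$ (equivalently, the content of the paper's Lemma~2(a)) along in the same induction --- e.g.\ in Rule~2 one needs $1+x_j\ne 0$ and $a_{j+1}-x_j\ne 0$, both of which follow from $0<x_j<a_{j+1}$ --- and a fully self-contained write-up should say so, since Lemma~2 is stated after this lemma and cannot be invoked without circularity unless proved jointly.
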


We also have the following

\begin{Lemma}[\cite{Camacho-Sad}]
If $a_0.a_t...a_1\in \mathcal A$, then

a) $[a_l,..., a_h]>0$ if $1\leq h\leq l\leq t$ and $t\geq 2$

b) $0<[a_t,..., a_{t-i}]<[a_t,..., a_1]$ for $0\leq i\leq t-2$
\end{Lemma}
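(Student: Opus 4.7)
The plan is to prove, by induction on the construction of $\mathcal{A}$, the following strengthened pair of statements for every $a_0.a_t.\dots.a_1 \in \mathcal{A}$ with $t\ge 1$:
\begin{itemize}
\item[(Q)] $[a_l,\dots,a_h] > 0$ for every $1\le h\le l\le t$;
\item[(R)] $[a_l,\dots,a_h] < a_{l+1}$ for every $1\le h\le l\le t-1$.
\end{itemize}
Part~(a) is then exactly (Q). Part~(b) follows from the monotonicity principle \emph{``lengthening a positive continued fraction by one entry at the bottom strictly increases its value''}: applied to $u_i:=[a_t,\dots,a_{t-i}]$ it yields $u_0<u_1<\dots<u_{t-1}=[a_t,\dots,a_1]$, so $[a_t,\dots,a_{t-i}]<[a_t,\dots,a_1]$ for $i\le t-2$. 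This monotonicity is itself a one-line induction on length using (R), with base case $[a_l]<[a_l,a_{l-1}]$ equivalent to $a_l a_{l-1}>1$; the latter holds because no two adjacent entries of any sequence in $\mathcal{A}$ are both equal to $1$, a property that propagates trivially through the two rules defining $\mathcal{A}$.

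The engine of the main induction is the pair of continued-fraction identities
\begin{equation*}
[\dots,x+1,1,y+1,\dots] = [\dots,x,y,\dots], \qquad [\dots,x+1,1] = [\dots,x],
\end{equation*}
both obtained by a direct unwinding of the recursion. The first identity shows in particular that Rule~2 preserves $a_0=[a_t,\dots,a_1]$, consistently with the previous lemma.

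The base case $1.1$ is trivial. In the Rule~1 step, sub-CFs avoiding the two new top positions coincide with original sub-CFs, and the remaining ones of the form $[a_t+1,\dots]$ or $[1,a_t+1,\dots]$ are handled directly using (R) to force $(a_t+1)-[a_{t-1},\dots,a_h]>1$. In the Rule~2 step the sub-CFs $[a'_l,\dots,a'_h]$ are organized by the position of the index range $[h,l]$ relative to the insertion window $\{j,j+1,j+2\}$. If $[h,l]$ lies entirely outside the window, the sub-CF coincides (after an index shift) with an original sub-CF. If $[h,l]$ contains the whole window ($h\le j$, $l\ge j+2$), the first identity collapses it to an original sub-CF. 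Most \emph{partial-overlap} configurations are dispatched either via the boundary identity $[\dots,x+1,1]=[\dots,x]$ or by a direct calculation using (R); the only delicate case is $h=j+2$, $l\ge j+3$, i.e.\ the sub-CF $[a_{l-1},\dots,a_{j+2},a_{j+1}+1]$. This case is treated by a secondary induction on $l-j$: positivity propagates via the recursion $1/(a_{l-1}-\,\cdot\,)$ using the previous step's (R), and the bound $<a_{l+1}$ follows from the identity $[a'_{l+1},\dots,a'_{j+2},1]=[a_l,\dots,a_{j+1}]$ combined with the bottom-lengthening monotonicity in the new sequence and inductive (R) on the original.

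The main difficulty lies precisely in this Rule~2 bookkeeping: each partial-overlap configuration (with endpoints hitting $j$, $j+1$, or $j+2$ in various combinations) must be matched to one of the two identities, the monotonicity principle, or a direct recursion, and positivity must always be verified before invoking monotonicity. Once every sub-case is dispatched, (Q) and (R) hold for every element of $\mathcal{A}$, and parts (a) and (b) follow as described.
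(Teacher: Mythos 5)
The paper does not prove this lemma at all: it is quoted verbatim from Camacho--Sad \cite{Camacho-Sad}, so there is no in-paper argument to compare against. Your proposal is a correct self-contained reconstruction along the standard lines of that reference: the induction on the two generating rules of $\mathcal A$, driven by the collapsing identities $[\dots,x+1,1,y+1,\dots]=[\dots,x,y,\dots]$ and $[\dots,x+1,1]=[\dots,x]$, is exactly the right mechanism, and your auxiliary statement (R) is (as you may notice) just part (a) shifted by one index, i.e.\ $[a_{l+1},\dots,a_h]>0 \iff [a_l,\dots,a_h]<a_{l+1}$, so the ``strengthening'' is really the natural induction hypothesis. I checked the delicate Rule~2 sub-case $h=j+2$ and the bottom-lengthening monotonicity step (including its positivity prerequisite and the no-adjacent-ones observation), and they go through as you describe.
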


Let $p_1, p_2,..., p_u$ be the ordered sequence of blow-up's that created the
linear chain $\mathcal C(r)$, then the composition $p=p_u \circ...\circ p_2 \circ p_1$,
is a map $p:\tilde U(r)\to U_k$ for which
$p^{-1}(r)=|\mathcal C(r)|=\cup _{l=1}^m P_l$, where each $P_l$ is an embedded
projective line and $r_l=P_l\cap P_{l+1}$ is just a point, and
$r_m= P_m\cap D_k$ where we are making the identification
$D_k\equiv\ov{p^{-1}(D_k\setminus \{r\})}$, using the fact that the restriction of $p$ to
$\tilde U(r)\setminus |\mathcal C(r)|$ is a biholomorphism onto
$U_k\setminus \{r\}$.

Repeating this process at each of the points of $\tau_k$ we obtain,
by composition of these maps, a holomorphic map
$p_{k+1}: U_{k+1}\to U_k$
such that $p^{-1}_{k+1}(\tau_k)=\cup_{r\in \tau_k}|\mathcal C(r)|$,
a union of the supports of the linear chains with origin at the points in $\tau_k$.
Moreover
$p_{k+1}: U_{k+1}\setminus \cup_{r\in \tau_k}|\mathcal C(r)|\to U_k\setminus \tau_k$
is a biholomorphism.
Define $D_{k+1}:=D_k\cup_{r\in\tau_k}|\mathcal \C(r)|$ where we have
identified $D_k$ with $\ov{ p_{k+1}^{-1}(D_k\setminus \tau_k)}$.
Finally, we define
$\pi_{k+1}: U_{k+1}\to U$ by $\pi_{k+1}:= \pi_k \circ p_{k+1} $, and $\fa_{k+1}$
as the analytic extension of $(p_{k+1}|_{U_{k+1}\setminus D_k})^*\fa_k$ to $D_{k+1}$.

The theorem of Seidenberg asserts that this process
ends after a finite number of steps. We observe that the dicritical
components in the final configuration are disjoint, have no
singularities and are everywhere transverse to the foliation. The
resolution of $\fa$ at $0\in\bc^2$ is $(U_{n}, \pi_{n}, D_{n},
\fa_{n})$ if all the singularities of $\fa_n$ in $D_n$ are
irreducible but at least one singularity of $\fa_{n-1}$ in $D_{n-1}$
is not irreducible.
\section{Proof of the Globalization theorem}

Let $U$ be a neighborhood of $0\in\bc^2$, small enough such that any
separatrix of $\fa_U$ is an irreducible curve, union of a leaf of
$\fa_U$ and the point $0\in\bc^2$. Let $\xi$ be a meromorphic
$q$-form defined on a fundamental domain $\mathcal D=V\setminus
\sep(\fa, V)\cup \mathcal S$ of $\fa|_V$ at $0\in \bc^2$, where
$\mathcal S$ is the union of ring neighborhoods of generating
cycles, one for each separatrix. Consider a generic linear chain
created at the k-step in the process of resolution with origin at a
point $r\in P\subset D_k$, $\mathcal C(r)=(P_l)_{l=1}^m$, where $P$
is the irreducible component of $D_k$ containing $r$. As before
denote by $p:U(r)\to U_k$ the sequence of blow-up's that defined
$\mathcal C(r)$. Then $ p_k\circ p: \tilde U(r)\to U$ defines
$\tilde\fa(r)=(p_k\circ p)^*\fa|_V= p^*(p_k^*(\fa))=p^*(\fa_k)$ in
the neighborhood $\tilde U(r)$ of $|\mathcal C(r)|\cup D_k$. Define
also
 $\tilde{\mathcal D}= (p_k\circ p)^{-1}(\mathcal D)$. We will write
  $P=P_{m+1}$, $\tilde U=\tilde U_r$, $\tilde\fa$=$\tilde\fa(r)$,
  $\tilde{\mathcal D}=\tilde{\mathcal D}(r)$ and $r=r_{m+1}$, for
simplicity.

Then $\tilde{\mathcal D}\subset \tilde U$ can be written as $\tilde
{\mathcal D}= \tilde U\setminus {\sep(\tilde\fa, \tilde U)}\cup
\tilde {\mathcal S}$, where $\tilde{\mathcal S}$ is the union of
ring neighborhoods of generating loops $\gamma$, one for each
resonant separatrix not contained in $|\mathcal C(r)|\cup D_k$ and
  $\sep(\tilde\fa, \tilde U)$ is the union of $|\mathcal C(r)|$ and the
  separatrices of $\tilde\fa$. By hypothesis the meromorphic $q$-form
  $\tilde\xi$, defined on $\tilde{\mathcal D} $ is
extensible with respect to $\tilde\fa$ on $\tilde U$ at each
singularity of $\tilde\fa$ and at a generic point in each dicritical
component of $\mathcal C(r)$. However, $\tilde{\mathcal D}$ is not
necessarily a fundamental domain for each singularity of
$\tilde\fa$.

Denote by $-k_l$, where $k_l$ is a positive integer, the
self-intersection number of $P_l\subset U_k$. Let $\zeta_l\subset
P_l$ be the set of singular points of  $\tilde\fa$ in $P_l$ which
are not corners of $\mathcal C(r)$ and have a positive index
relative to $P_l$. Clearly $c_l:=\sum_{p\in \zeta_l} i_p(P_l)$ is a
not negative number and we define $\bar k_l=k_l+c_l$, for
$l=1,...,m$. Define also $\zeta=\cup_{l=1}^m \zeta_l$, and assume
that any singularity in $\cup_{l=1}^m P_l$ outside $\zeta$ is
irreducible.

We will say that $\tilde\xi$ has a meromorphic extension to a
neighborhood of  $|\mathcal C(r)|\setminus \zeta$ if for any compact
subset $K\subset {|\mathcal C(r)|\setminus\zeta}$ there is a
neighborhood $K\subset \tilde U_K\subset \tilde U$ in $\tilde U$ and
a meromorphic extension of $\tilde\xi$ to $\tilde U_K$.

\begin{Definition}[minimal chain]
{\rm A linear chain $\mathcal C(r)=( P_l)_{l=1}^m $ is called
$\it{minimal}$ if any corner $r_{l}=P_{l}\cap P_{l+1}$, $l=1,..., m$ is
of one of the following types:

(i) a regular point.

(ii) a saddle-node singularity with $i_{r_l}(P_{l+1})=0$

(iii) a resonant singularity with
$i_{r_{l}}(P_l)+ i_{r_{l-1}}(P_l)= -\bar k_l$, if $l>1$, and
$i_{r_1}(P_1)=-\bar k_1$ for $l=1$.
}
\end{Definition}

\begin{Proposition}
\label{Proposition:locallyextensible} Suppose that $\mathcal
C(r)=(P_l)_{l=1}^m $ is a linear chain of $\tilde\fa$ containing no
dicritical components such that any singularity in $|\mathcal
C(r)|\setminus \zeta$ is irreducible. Assume there is a meromorphic
$q$-form $\tilde\xi$ defined on $\tilde {\mathcal D}$. Then, either
$\tilde \xi$ has a meromorphic extension to a neighborhood of
$|\mathcal C(r)|\setminus \zeta$, or $\mathcal C(r)$ is a minimal
chain.
\end{Proposition}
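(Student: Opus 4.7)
The plan is to argue by contrapositive: assuming $\mathcal C(r)$ is not minimal, I construct a meromorphic extension of $\tilde\xi$ to a neighborhood of $|\mathcal C(r)|\setminus\zeta$. The extension is produced by induction along the chain, extending $\tilde\xi$ successively to neighborhoods of $P_1$, $P_1\cup P_2$, \dots, $\bigcup_{l=1}^m P_l$, each minus $\zeta$. At every stage one has to cover the non-corner points of $P_l$ and to cross the corners $r_{l-1}$ and $r_l$.

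At a non-corner irreducible singularity $q$ of $\tilde\fa$ lying in $P_l\setminus\zeta_l$, the extensibility hypothesis applies directly: the fundamental domain of $\tilde\fa$ at $q$ is a punctured neighborhood minus the two local separatrices through $q$, one of which is $P_l$ and the other is not contained in $|\mathcal C(r)|\cup D_k$. Any ring neighborhood demanded by resonance or a saddle-node structure at $q$ is either supplied by $\tilde{\mathcal S}$ (when the relevant separatrix lies outside $|\mathcal C(r)|\cup D_k$) or by the portion of $P_l$ already extended at the preceding inductive step. Regular points of $\tilde\fa$ on $P_l$ are then crossed by the standard meromorphic extension argument, using that the extension on adjacent punctured neighborhoods yields bounded pole order along $P_l$.

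The heart of the argument is the extension across a corner $r_l=P_l\cap P_{l+1}$, which is an irreducible singularity. Since $\mathcal C(r)$ has no dicritical components such a corner is necessarily singular, so the only minimality conditions that can be imposed on it are (ii) and (iii), and non-minimality at $r_l$ falls into one of three cases. First, if $r_l$ is nondegenerate nonresonant, the fundamental domain at $r_l$ equals the neighborhood minus separatrices and is already contained in $\tilde{\mathcal D}$, so extensibility gives the extension immediately. Second, if $r_l$ is a saddle-node with strong separatrix $P_l$ (so $i_{r_l}(P_{l+1})\neq 0$, violating (ii)), the fundamental domain needs a ring neighborhood in $P_l$, which the induction has already provided. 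Third, if $r_l$ is resonant but the identity $i_{r_l}(P_l)+i_{r_{l-1}}(P_l)=-\bar k_l$ of (iii) fails, the Camacho-Sad identity
\[
\sum_{q\in P_l} i_q(\tilde\fa,P_l)\;=\;-k_l\;=\;-\bar k_l+c_l
\]
together with the definition of $\zeta_l$ forces a non-corner, non-$\zeta$ irreducible singularity to exist on $P_l$; extending across that auxiliary point supplies the missing ring neighborhood on $P_l$ and permits the crossing of $r_l$.

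The principal obstacle I anticipate is the bookkeeping needed to verify that the three cases above exhaust non-minimality and that the successive local extensions glue consistently. In particular, the continued-fraction arithmetic $a_0=[a_t,\ldots,a_1]$ of the Camacho-Sad Lemma, applied to the string $n_r.k_m.\cdots.k_1\in\mathcal A$, has to be invoked to rule out a circular dependency of ring neighborhoods that would force every corner to be minimal and thereby block the global extension.
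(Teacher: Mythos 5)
Your overall strategy coincides with the paper's: induction along the chain, extension past non-corner irreducible singularities via the extensibility hypothesis and Levi's theorem, and the index theorem to manufacture an auxiliary non-corner singularity through which a resonant or saddle-node corner can be crossed. But the step you flag as ``the principal obstacle I anticipate'' is not bookkeeping --- it is the actual content of the proposition, and as written your argument for it does not close. The Camacho--Sad identity on $P_l$ reads $i_{r_{l-1}}(P_l)+i_{r_l}(P_l)+c_l+\sum_{\text{other}}i_p(P_l)=-k_l$, so the existence of a usable auxiliary point $p\in P_l\setminus\{r_{l-1},r_l,\zeta_l\}$ with $i_p(P_l)\neq 0$ is equivalent to $i_{r_l}(P_l)+i_{r_{l-1}}(P_l)\neq-\bar k_l$. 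The failure of condition (iii) at $r_l$ controls only $i_{r_l}(P_l)$; it says nothing about $i_{r_{l-1}}(P_l)$, the index contributed by the \emph{inner} corner. That quantity is pinned down only by the inductive alternative applied to the subchain $\mathcal C(r_{l-1})$: either the extension has already reached a neighborhood of $r_{l-1}$ (in which case no auxiliary point is needed --- Levi plus the fundamental-domain argument crosses $r_l$ directly), or $\mathcal C(r_{l-1})$ is minimal, which forces $i_{r_{l-1}}(P_l)=0$ or $i_{r_{l-1}}(P_l)=-[\bar k_{l-1},\dots,\bar k_h]$, and then Lemma~2 gives $[\bar k_{l-1},\dots,\bar k_h]<\bar k_l$, hence $\bar k_l+i_{r_{l-1}}(P_l)>0$, so that the failure of (iii) at $r_l$ indeed yields $i_{r_l}(P_l)+i_{r_{l-1}}(P_l)\neq-\bar k_l$. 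Without this computation your claim that the index identity ``forces'' the auxiliary singularity is unjustified, and there is no ``circular dependency'' to rule out once it is done: the continued-fraction lemma is used positively, to bound the corner index, not to exclude a degenerate configuration.

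This also shows why the pure contrapositive framing (``not minimal $\Rightarrow$ extends'') is the wrong shape for the induction. To exploit non-minimality at the corner $r_t$ you must already know which branch of the dichotomy the length-$(t-1)$ subchain falls into, because the minimality of $\mathcal C(r_{t-1})$ is the \emph{input} that determines $i_{r_{t-1}}(P_t)$. The paper therefore proves, for every $t$, the disjunction ``$\tilde\xi$ extends to a neighborhood of $P_1\cup\dots\cup P_t\setminus(\zeta_1\cup\dots\cup\zeta_t)$ or $\mathcal C(r_t)$ is minimal,'' and each stage consumes the previous disjunction. Reorganize your argument in that form, carry out the index computation above, and the rest of your sketch (treatment of non-corner points, of nondegenerate nonresonant corners, and of saddle-nodes with strong manifold in $P_l$) matches the paper's proof.
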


\begin{proof}

We proceed by induction on the length $m$ of the linear chain
$\mathcal C(r)$. Suppose that $m=1$. Then $r=P_1\cap P_2$. Assume
first that $r$ is a nondegenerate nonresonant singularity, then as
$\tilde\xi$ is extensible with respect  to $\tilde\fa$ at $r$ it
extends as a meromorphic $q$-form to a neighborhood of $r$. By
Levi's extension theorem (\cite{Siu}) there is an arbitrarily small
neighborhood $N_1$ of the separatrices transverse to $P_1$ different
than $P_2$ such that $\tilde\xi$ extends as a meromorphic $q$-form
to $\tilde U\setminus N_1$.

Given an irreducible singular point $p\in P_1\setminus \zeta_1$,
then there are two possibilities. Either $i_p(\tilde\fa,P_1)\neq 0$,
and so there is a separatrix $\tilde s$ of $\tilde\fa$ at $p$,
transverse to $P_1$. If $p$ is either a resonant or a saddle-node
then the holonomy of $\tilde s$ is resonant and so $\tilde\xi$ is
defined in a fundamental domain at $p$, consequently it extends as a
meromorphic $q$-form to a neighborhood of $p$. Similarly, if $p$ is
not degenerate, and since the $q$-form $\tilde\xi$ is extensible
with respect  to $\tilde\fa$, at $p$, then it extends as a
meromorphic $q$-form to a neighborhood of $p$. On the other hand, if
 $i_p(\tilde\fa,P_1)= 0$
then $p$ is a saddle-node with its strong invariant manifold
contained in $P_1$. Moreover $\tilde\xi$ is defined in a fundamental
domain at $p$ and since it is extensible with respect to $\tilde\fa$
at $p$, it can be extended  to a neighborhood of $p$ as a meromorphic one-form.
Thus, we can assume that $r$ is either a saddle-node or a resonant
singularity. Suppose that $\mathcal C(r)$ is not minimal. Then
$i_{r}(P_2)\neq 0$ $\it and$ $i_{r}(P_2)\neq -1/\bar k_1$, and $r$
is either a saddle-node singularity with index $i_{r}(P_1)= 0$ or a
resonant singularity with $i_{r}(P_1)\neq - \bar k_1$. In both cases
we have that $i_{r}(P_1)+ c_1\neq-k_1$, and so by the index theorem
there is a singular point $p\in P_1\setminus \{r,\zeta_1\}$ with
$i_p(P_1)\neq 0$ not positive. By hypothesis $p$ is irreducible,
then there is a separatrix $\tilde s$ of $\tilde\fa$ at $p$,
transverse to $P_1$. If $p$ is either a resonant or a saddle-node
then the holonomy of $\tilde s$ is resonant and so $\tilde\xi$ is
defined in a fundamental domain at $p$ and so it extends as a
meromorphic $q$-form to a neighborhood of $p$. Similarly, if $p$ is
not degenerate the $q$-form $\tilde\xi$, extensible with respect to
$\tilde\fa$ at $p$, extends as a meromorphic $q$-form to a
neighborhood of $p$. Therefore, by Levi´s extension theorem
(\cite{Siu}) we can extend $\tilde\xi$ as close to $r$ as we wish.
Since $r$ is either a saddle-node with the strong separatrix tangent
to $P_1$ or a resonant singularity, then $\tilde\xi$ is already
defined in a fundamental domain at $r$ and so it can be extended to
a neighborhood of $r$. From this we obtain that $\tilde \xi$ extends
to a neighborhood of $P_1\setminus \zeta_1$.

Fix any integer $t$, $2\leq t\leq m$, and assume that the
alternative stated in the theorem holds true for linear chains of
length $t-1$. Then we have two possibilities:  either a) $\tilde\xi$
has been extended to $P_1\cup...\cup P_{t-1}\setminus
\zeta_1\cup...\cup \zeta_{t-1}$, or b) the linear chain $\mathcal
C(r_{t-1})$ is minimal. Consider the linear chain $\mathcal
C(r_{t})$ of  length $t$ and assume that $i_{r_t}(P_{t+1})\neq 0$.
If $r_t$ is a not degenerate, nonresonant singularity, then
$\tilde\xi$ extends to a neighborhood of $r_t$ and from there to a
neighborhood of $P_1\cup...\cup P_t\setminus \zeta_1\cup...\cup
\zeta_t$. We can then assume that $r_t$ is either a saddle-node with
$i_{r_t}(P_{t})= 0$ or a resonant singularity. If case a) happens
then $\tilde\xi$ is well defined in a neighborhood of $r_{t-1}$,
then by Levi's theorem $\tilde\xi$ will extend as close to $r_t$ as
desired. Then $\tilde\xi$ is defined in a fundamental domain at
$r_t$ and therefore extends as a meromorphic $q$-form to a
neighborhood of $r_t$ and thus to $P_1\cup...\cup P_t\setminus
\zeta_1\cup...\cup \zeta_t$. In case b) we have that either
$i_{r_{t-1}}(P_t)=0$, or $i_{r_{t-1}}(P_t)=-[\bar k_{t-1},..., \bar
k_h]$, where $h$ is the greatest positive integer $2\leq h \leq
{t-1}$ such that $i_{r_{h-1}}(P_h)=0$. It is easy to see from Lemma
2, that $-[\bar k_{t-1},..., \bar k_h]> -\bar k_t$. Thus $\bar k_t +
i_{r_{t-1}}(P_t) >0$. Suppose further that $i_{r_t}(P_{t+1})\neq 0$
and $i_{r_t}(P_{t+1})\neq -1/\bar k_t+i_{r_{t-1}}(P_t)$, then either
$r_t$ is a saddle-node with $i_{r_t}(P_t)=0$ or it is a resonant
singularity with $i_{r_t}(P_t)\neq -\bar k_t- i_{r_{t-1}}(P_t)$. In
any case $i_{r_t}(P_t)+ i_{r_{t-1}}(P_t)\neq -\bar k_t$ and
therefore there exists $p\in P_t\setminus \{r_{t-1},{r_t},\zeta_t\}$
such that $i_p(P_t)\neq 0$. Thus we can extend $\xi$ through $p$ to
a neighborhood of $r_{t-1}$ and $r_t$ and then to a neighborhood of
$P_1\cup...\cup P_t\setminus \zeta_1\cup...\cup \zeta_t$. It is
clear that the only alternative left is $ i_{r_t}(P_{t+1})=0$ or
$i_{r_t}(P_{t+1})= -1/\bar k_t+i_{r_{t-1}}(P_t)$. This last equation
is equivalent to $i_{r_t}(P_t)+ i_{r_{t-1}}(P_t)=-\bar k_t$.

\end{proof}

 \begin{Lemma}
 Suppose that in the linear chain $\mathcal C(r_m)$, $P_{m+1}$ is dicritical and the $P_l$,
 $l=1,..., m$ are nondicritical. Then $\tilde \xi$ can be extended
 to a neighborhood of
 $P_1\cup...\cup P_m\setminus \zeta$.
 \end{Lemma}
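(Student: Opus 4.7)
My plan is to apply Proposition~\ref{Proposition:locallyextensible} to the chain $\mathcal{C}(r_m)=(P_l)_{l=1}^m$ as a black box, and then invoke the dicritical character of $P_{m+1}$ to finish. Since by hypothesis $\mathcal{C}(r_m)$ contains no dicritical component, and any singularity in $|\mathcal{C}(r_m)|\setminus\zeta$ is irreducible by the global setup, that proposition gives the dichotomy: either $\tilde\xi$ already extends meromorphically to a neighborhood of $|\mathcal{C}(r_m)|\setminus\zeta$ (which is the desired conclusion), or $\mathcal{C}(r_m)$ is minimal. It therefore suffices to treat the minimal case.

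Assume $\mathcal{C}(r_m)$ is minimal. Since $P_{m+1}$ is dicritical, it is not contained in the separatrix set $\sep(\tilde\fa,\tilde U)$ and $\tilde\fa$ is transverse to $P_{m+1}$ off a finite exceptional set. The infinitesimal extensibility hypothesis provides a meromorphic extension of $\tilde\xi$ near a generic point $q_0\in P_{m+1}$, and the extensibility assumption at irreducible singularities yields local meromorphic extensions near each of the finitely many singular points of $\tilde\fa$ on $P_{m+1}$. Patching these local extensions along the compact curve $P_{m+1}\cong\bp^1$ via Levi's extension theorem (\cite{Siu}) across codimension-two gaps, I obtain a meromorphic extension of $\tilde\xi$ to a neighborhood of $P_{m+1}$ minus at worst the corner points. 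In particular the extension reaches a punctured neighborhood of $r_m=P_m\cap P_{m+1}$ from the $P_{m+1}$ side; a final application of Levi's theorem (when $r_m$ is regular for $\tilde\fa$) or of the extensibility hypothesis (when $r_m$ is an irreducible singularity) produces the extension on a full neighborhood of $r_m$.

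With $\tilde\xi$ now defined on a neighborhood of $r_m$, I run the inductive propagation of the proof of Proposition~\ref{Proposition:locallyextensible} in reverse along the chain. For $t$ decreasing from $m$ to $1$, the extension already obtained near the corner $r_t$, combined with the extensibility at each irreducible singularity of $P_t\setminus(\{r_t,r_{t-1}\}\cup\zeta_t)$ and Levi's theorem filling the complementary codimension-two gaps, yields a meromorphic extension of $\tilde\xi$ to a neighborhood of $P_t\setminus\zeta_t$---hence in particular to a neighborhood of the next corner $r_{t-1}$. Iterating down to $t=1$ gives the desired meromorphic extension to a neighborhood of $P_1\cup\cdots\cup P_m\setminus\zeta$.

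The main obstacle is the patching along $P_{m+1}\cong\bp^1$ in the second step: although each ingredient (infinitesimal extensibility on a dicritical component, extensibility at irreducible singularities, and Levi's theorem) is available, their global combination must be executed so as to reach a neighborhood of $r_m$ despite the other corners of $P_{m+1}$ and its intersections with further components of $D_k$. Verifying that the Hartogs figures at each patching are genuine, and that no codimension-one invariant curve attached to $P_{m+1}$ obstructs the propagation, is the delicate analytic point of the argument.
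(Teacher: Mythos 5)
Your overall architecture (invoke Proposition~\ref{Proposition:locallyextensible} for the dichotomy, then use the dicritical component to dispose of the minimal case) is the right shape, and your final ``propagate back down the chain'' step is sound once a point of $P_m$ has been reached. But the middle step --- entering the chain transversally through $P_{m+1}$ and then crossing into $P_m$ at the corner $r_m$ --- has a genuine gap. Since $P_{m+1}$ is dicritical, the corner $r_m=P_m\cap P_{m+1}$ is a \emph{regular} point of $\tilde\fa$ whose local leaf is $P_m$ itself. Coming in from the $P_{m+1}$ side you obtain, at best, a meromorphic extension of $\tilde\xi$ to a full neighborhood of $r_m$ \emph{minus the invariant curve} $P_m$. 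That is not enough to cross $P_m$: neither Levi's theorem (a bidisc minus $\{y=0\}$ is not a Hartogs figure; $e^{1/y}\,dy$ is meromorphic there and does not extend) nor the extensibility hypothesis applies, because a fundamental domain at a regular point is a neighborhood minus a curve \emph{transverse} to the foliation, not minus the leaf, and $r_m$ is neither a generic point of a dicritical component nor a singular point of $\tilde\fa$. This asymmetry is exactly why extension works at a generic point of $P_{m+1}$ (the removed divisor is transverse there, so the complement is a fundamental domain) but fails at $r_m$. To extend across an invariant $P_l$ one must first be meromorphic at some point \emph{of} $P_l$, or be in a fundamental domain at a singularity of $P_l$.

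The paper supplies that entry point by a different use of the dicritical hypothesis. It applies the Proposition to the sub-chain $\mathcal C(r_{m-1})=(P_l)_{l=1}^{m-1}$ (whose parent component $P_m$ is invariant). If $\tilde\xi$ extends there, one is at $r_{m-1}\in P_m$ and Levi propagates along $P_m$. If $\mathcal C(r_{m-1})$ is minimal, then $i_{r_{m-1}}(P_m)$ equals $0$ or $-[\bar k_{m-1},\dots,\bar k_h]>-\bar k_m$, while the dicriticalness of $P_{m+1}$ forces $i_{r_m}(P_m)=0$; the Camacho--Sad index theorem ($\sum_q i_q(P_m)=-k_m$) then produces a singular point $p\in P_m\setminus(\{r_{m-1},r_m\}\cup\zeta_m)$ with $i_p(P_m)\neq 0$, hence an irreducible singularity with a separatrix transverse to $P_m$ at which $\tilde\xi$ extends; Levi then propagates along $P_m$ to $r_{m-1}$ and $r_m$ and down the chain. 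In short, the role of the dicritical $P_{m+1}$ is to make the corner contribute zero index so that the index count yields an interior singular point of $P_m$ --- not to provide a transverse route into $r_m$. Your proof needs this (or an equivalent) argument to get onto $P_m$ in the minimal case.
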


 \begin{proof}
 If $\tilde\xi$ extends to a neighborhood of
 $P_1\cup...\cup P_{m-1}\setminus \zeta$, then in particular it is well defined in a
 neighborhood of $r_{m-1}$. Thus it can be extended to a neighborhood of
 $r_m$. Suppose on the other hand that the linear chain $\mathcal C(r_{m-1})$ is minimal.
 Then either
 $i_{r_{m-1}}(P_m)=0$,
 or
 $i_{r_{m-1}}(P_m)=-[\bar k_{m-1},..., \bar k_h]$,
 where $h$ is the greatest positive integer
 $2\leq h \leq {m-1}$ such that $i_{r_{h-1}}(P_h)=0$. Since $[\bar k_{m},..., \bar k_h]>0$
 we have that $\bar k_m >[\bar k_{m-1},..., \bar k_h]$, and so $i_{r_{m-1}}(P_m)>-\bar k_m$.
 This is the same as $i_{r_{m-1}}(P_m)+c_m > -k_m$. By the index theorem there is
 $p\in P_m\setminus \{r_{m-1}, r_m, \zeta_m\}$ such that $i_p(P_m)\neq 0$. Thus
 $\tilde\xi$ can be extended through $p$ to a neighborhood of $r_m$ and $r_{m-1}$ and from there
 to a neighborhood of $P_1\cup...\cup P_m\setminus \zeta$.
 \end{proof}

 \begin{Lemma}
 Suppose that in the linear chain $\mathcal C(r_m)$, $P_1$ is dicritical and  the $P_l$, $l\neq 1$,
 are nondicritical. Then either $\tilde\xi$ extends to
 $P_1\cup...\cup P_m\setminus \zeta$, or
 the chain $\mathcal C_2(r_m)=(P_l)_{l=2}^m$ is minimal.
 \end{Lemma}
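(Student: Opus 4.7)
The plan is to combine a meromorphic extension obtained from the dicritical component $P_1$ with the inductive argument of Proposition~\ref{Proposition:locallyextensible} applied to the sub-chain $\mathcal C_2(r_m)=(P_l)_{l=2}^m$.

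First I would use that $P_1$ is dicritical to invoke the infinitesimal extensibility of $\tilde\xi$, producing a meromorphic extension at a generic point of $P_1$. Since $\tilde\fa$ is transverse to $P_1$ off a finite set of tangencies and singular points, I would then propagate this local extension by Levi's theorem \cite{Siu} (together with the standard holonomy-transport argument along the regular leaves of $\tilde\fa$ meeting $P_1$) to a neighborhood of $P_1\setminus F_1$, where $F_1\subset P_1$ is the finite set of bad points, namely the tangencies of $\tilde\fa$ with $P_1$, the singular points of $\tilde\fa$ on $P_1$, and the corner $r_1$. In particular, $\tilde\xi$ is thereby defined on a punctured neighborhood of $r_1$.

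Next I would apply Proposition~\ref{Proposition:locallyextensible} to $\mathcal C_2(r_m)$, which is nondicritical by hypothesis and whose singularities outside $\zeta$ are irreducible (inherited from the full chain $\mathcal C(r_m)$). The proposition produces the dichotomy that either $\tilde\xi$ extends meromorphically to a neighborhood of $P_2\cup\cdots\cup P_m\setminus\zeta$, or $\mathcal C_2(r_m)$ is minimal. In the second branch we are done: this is precisely the second alternative of the statement. In the first branch the two extensions, the one along $P_1\setminus F_1$ and the one along $P_2\cup\cdots\cup P_m\setminus\zeta$, agree on the open set of the fundamental domain $\tilde{\mathcal D}$ where both are defined, so by uniqueness of meromorphic extension they glue to a meromorphic $q$-form on a neighborhood of $(P_1\cup\cdots\cup P_m)\setminus(\zeta\cup F_1)$. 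A final application of Levi's theorem across the isolated points of $F_1\setminus\zeta$, in particular across the corner $r_1$, then produces the desired extension to a neighborhood of $P_1\cup\cdots\cup P_m\setminus\zeta$.

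The main obstacle I anticipate is justifying that Proposition~\ref{Proposition:locallyextensible} applies cleanly to the sub-chain $\mathcal C_2(r_m)$: its bottom component $P_2$ carries an extra corner $r_1$ that is not a corner of the sub-chain, so one might worry that the base step of the proposition's induction fails to give the right index-theoretic conclusion on $P_2$. The dicritical extension from the first step resolves this, because it supplies $\tilde\xi$ in a punctured neighborhood of $r_1$, effectively placing us in ``case (a)'' at the bottom of the proposition's induction; the additional contribution $i_{r_1}(P_2)$ is then absorbed into the index calculation on $P_2$ in exactly the same way that $i_{r_{l-1}}(P_l)$ is handled at interior steps of Proposition~\ref{Proposition:locallyextensible}, and the inductive propagation along $P_2,P_3,\dots,P_m$ proceeds verbatim from the proof of that proposition.
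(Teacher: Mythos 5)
Your argument is correct and is essentially the paper's own proof, which is stated in one line: the lemma follows from Proposition~\ref{Proposition:locallyextensible} applied to the sub-chain $(P_l)_{l=2}^m$ because $i_{r_1}(P_2)=0$ and $\tilde\xi$ extends to $P_1\setminus\{r_1\}$ via the dicritical hypothesis. The two points you flag and resolve --- the extension along the dicritical component and the harmlessness of the extra corner $r_1$ on $P_2$ (which, having zero index, does not disturb the index-theoretic step of the Proposition) --- are exactly the two facts the paper's proof invokes.
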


 \begin {proof}
 This is clearly a consequence of the Proposition as $i_{r_1}(P_2)=0$, and $\tilde\xi$ extends to
 $P_1\setminus\{r_1\}$.

 \end {proof}

\begin{proof}[Proof of Theorem~\ref{Theorem:Globalization}]
Let us now prove Theorem~\ref{Theorem:Globalization}.  Suppose $(U_n, \pi_n, D_n, \fa_n)$
is the resolution of $\fa$ at $0\in\bc^2$. Consider a
 linear chain $\mathcal C(r)=(P_l)_{l=1}^m$ of order $k$ with origin at a point
 $r\in D_{n-1}$. In this case we can take $\zeta=\emptyset$
 in Proposition~\ref{Proposition:locallyextensible} and assume that
 $\mathcal C(r)$ is minimal. From (ii) in the definition of minimal chain
 we obtain that $i_r(P)=-[k_m,...,k_h]\geq -[k_m,...,k_1]$. Since $k=[k_m,...,k_1]$ is
 the number of times that the point $r$ was blown-up to create $\mathcal C(r)$, we obtain
 that the index of $r$ before the creation of $\mathcal C(r)$ is $i_r(P)_b=i_r(P)-k\geq 0$.
 Thus, for the linear chains in $D_{n-1}\setminus D_{n-2}$ the origins of
 the linear chains in $D_{n}\setminus D_{n-1}$ contribute with a positive index.
 Thus we can apply again the Proposition~\ref{Proposition:locallyextensible}.

 Finally, we consider $\fa_0$. Let $\mathcal C(q_1),..., \mathcal C(q_t)$ be all linear
 chains starting at the reduced singularities in $D_0$. Let $l_i$ denote the order of
 $\mathcal C(q_i)$. Then $i_{q_i}(D_0)\geq 0$, for $i=1,...,t$. Since the self-intersection
 number of $D_0$ is $-1$, there must exist a point $s\in D_0\setminus \{q_1,..., q_t\}$
 such that $i_s(D_0)\neq 0$. Therefore $\xi$ can be extended to a neighborhood of
 $ D_0\setminus \{q_1,..., q_t\}$ and from there to the whole of $D_1\cup...\cup D_n$.
\end{proof}

\section{Foliations with  projective transverse structure}
\label{section:Introduction}
The  Globalization Theorem has some important consequences in the
study of transverse structure of holomorphic foliations with singularities.
We focus on the case of projective transverse structures, which is the
general case in codimension one (the affine and additive remaining cases
are viewed as subcases).

\subsection{Transversely projective foliations with singularities}

Let $\fa$ be a codimension one holomorphic foliation on a
connected complex manifold $M^m$, of dimension $m \ge 2$, having
singular set $\sing(\fa)$ of codimension $\ge 2$. The foliation
$\fa$ is {\it transversely projective\/} if there is an open cover
$\{ U_j ,\, j\in J \}$ of $M\backslash\sing(\fa)$ such that on
each $U_j$ the foliation is given by a holomorphic submersion
$f_j\colon U_j \to \ov\bc$ and on each intersection
$U_i \cap U_j \ne \emptyset$
we have
$f_i = \frac{a_{ij}f_j+b_{ij}}{c_{ij}f_j+d_{ij}}$
for some locally constant functions
$a_{ij}, b_{ij}, c_{ij}, d_{ij}$
with
$a_{ij}d_{ij} - b_{ij}c_{ij} = 1$.
If we have
$f_i = a_{ij}f_j + b_{ij}$
for locally constant functions
$a_{ij} \ne 0$, $b_{ij}$
then
$\fa$ is {\it transversely affine\/} in $M$ (\cite{Scardua1}).
In few words,  a holomorphic foliation $\fa$ of codimension one and
having singular set $\sing(\fa)$ of codimension $\ge 2$ in a complex
manifold $M^m$,  $m \ge 2$ is {\it transversely projective\/}
if the underlying nonsingular foliation
$ \fa|_{M\backslash\sing(\fa)}$
is transversely projective on
$ M\backslash\sing(\fa)$.
 Basic references for transversely affine and
transversely projective foliations are found in \cite{Godbillon}.

\begin{Remark} {\rm Assume that the dimension is $m=2$. Let $q\in\sing(\fa)$ be an
isolated singular point and $U$ a small bidisc such that
$\sing(\fa) \cap U =\{q\}$. Then $U\setminus \{q\}$ is
simply-connected and therefore
$\fa\big|_{U\setminus \{q\}}$ is given by a holomorphic
submersion $f\colon U\setminus \{q\}\to \ov \bc$ (\cite{Scardua1}). By Hartogs'
classical extension theorem \cite{Gunning} the map $f$ extends as
a meromorphic function $f \colon U \to \ov \bc$ (possibly with
an indeterminacy point at $q$). Thus, according to our definition, the
singularities of a foliation admitting a projective transverse
structure are all of type $df=0$ for some local meromorphic
function. For example we can consider $\fa$ given in a
neighborhood of the origin  $0 \in\bc^2$ by $ k xdy - \ell ydx=0$
where $k, \ell \in \mathbb N$. Then $\fa$ is transversely
projective in this neighborhood and given by the meromorphic
function $f = y^k / x^\ell$. Nevertheless, in this work we will be
considering foliations which are transversely projective in the
complement of {\it codimension one invariant divisors}. Such divisors
may, a priori, admit other types of singularities. In particular,
they can exhibit singularities which do not admit meromorphic
first integrals. An example is given by a {\it hyperbolic
singularity} of the form $xdy - \lambda ydx=0$ where $\lambda \in
\bc \setminus \mathbb R$. The corresponding foliation is
transversely projective (indeed, transversely affine) in the
complement of the set of separatrices $\{x=0\}\cup\{y=0\}$.
However, an easy computation with Laurent series shows that the
foliation admits no meromorphic first integral in a neighborhood
of the origin minus the two coordinate axes.}
\end{Remark}

\subsection{Projective transverse structures and differential forms}
\label{subsection:Projectivetriples}

Let $\fa$ be  a codimension one holomorphic  foliation with
singular set $\sing(\fa)$ of codimension $\ge 2$ on a complex
manifold $N$. The existence of a projective transverse structure
for $\fa$ is equivalent to the existence of suitable triples of
differential forms as follows:

\begin{Proposition} [\cite{Scardua1}]
\label{Proposition:forms}  Assume that $\fa$ is given by an
integrable holomorphic one-form $\Om$ on $N$ and suppose that
there exists a holomorphic one-form $\eta$ on $N$ such that
$\text{\rm(P1) } d\Om = \eta \wedge \Om$. Then $\fa$ is
transversely projective on $N$ if and only if there exists a
holomorphic one-form $\xi$ on $N$ such that $\text{\rm(P2) } d\eta
= \Om \wedge \xi$ and $\text{\rm(P3) }d\xi = \xi \wedge \eta$.
\end{Proposition}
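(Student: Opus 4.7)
The plan is to reformulate (P1)--(P3) as the flatness of an $\mathfrak{sl}(2,\bc)$-valued one-form, and then to pass between such flat forms and projective transverse structures via local $\SL(2,\bc)$-primitives. Set
$$\Theta \;=\; \begin{pmatrix} \eta/2 & -\xi/2 \\ -\Om & -\eta/2 \end{pmatrix}.$$
A direct entrywise computation shows that the single Maurer--Cartan equation $d\Theta + \Theta\wedge\Theta = 0$ is equivalent to the three conditions (P1), (P2), (P3) holding simultaneously. Thus the proposition asserts precisely that $\fa$ is transversely projective on $N$ if and only if the partial data $(\Om,\eta)$ admits a completion to a flat $\mathfrak{sl}(2,\bc)$-connection by a suitable $\xi$.

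For the direction $(\Leftarrow)$, assume that $\xi$ exists so that $\Theta$ is flat. On any simply connected open $V\subset N\setminus\sing(\fa)$, the Frobenius theorem produces a holomorphic primitive $\Phi\colon V\to \SL(2,\bc)$ with $d\Phi = \Theta\Phi$; writing $\Phi^{-1} = \bigl(\begin{smallmatrix}\alpha & \beta \\ \gamma & \delta\end{smallmatrix}\bigr)$ and $f := \alpha/\gamma\colon V\to\ov\bc$, a direct computation from $d(\Phi^{-1}) = -\Phi^{-1}\Theta$ and the shape of $\Theta$ yields $df = -\Om/\gamma^{2}$, so the level sets of $f$ are exactly the leaves of $\fa|_V$. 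Over two overlapping simply connected opens, the two primitives of $\Theta$ differ by a constant right factor in $\SL(2,\bc)$, which translates into a M\"obius change of variables for $f$. Covering $N\setminus\sing(\fa)$ by simply connected opens then produces a M\"obius atlas, i.e.\ a projective transverse structure for $\fa$.

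For the direction $(\Rightarrow)$, start from a projective atlas $\{(U_j,f_j)\}$ with M\"obius cocycle $M_{ij}\in \SL(2,\bc)$. Writing $\Om = g_j\,df_j$ on $U_j$ produces the local candidate $\eta_j := dg_j/g_j$ satisfying (P1) locally, so the global $\eta$ can be written as $\eta = \eta_j + A_j\Om$ on each $U_j$ for some holomorphic function $A_j$. A direct computation yields $d\eta = \Om\wedge(-dA_j - A_j\eta_j)$, so $\xi_j := -dA_j - A_j\eta_j$ is a local candidate satisfying (P2) modulo $\Om$. The transformation law $g_i = g_j(c_{ij}f_j+d_{ij})^{2}$ together with the determinant-one normalization of the M\"obius cocycle yields explicit formulas for the change of $\xi_j$ on overlaps, and a bookkeeping calculation shows that the $\xi_j$ can be corrected by adding suitable multiples $B_j\,\Om$ to produce a global holomorphic one-form $\xi$; (P3) then follows because the projective structure on $\ov\bc$ is flat (the Schwarzian vanishes on M\"obius transformations).

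Finally, $\eta$ and $\xi$ are initially holomorphic on $N\setminus\sing(\fa)$, and since $\sing(\fa)$ has codimension $\ge 2$, Hartogs' extension theorem extends them as holomorphic one-forms to all of $N$; the identities (P1)--(P3) persist by continuity. The main obstacle is the patching argument in the forward direction: making the corrections $B_j\,\Om$ cohere on triple overlaps requires crucially the $\SL(2,\bc)$-normalization $a_{ij}d_{ij}-b_{ij}c_{ij}=1$, so that the would-be cocycle obstruction vanishes; this is the step where the ``projective'' (rather than merely affine) character of the transverse structure plays an essential role.
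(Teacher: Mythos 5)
The paper itself offers no proof of this proposition: it is imported verbatim from \cite{Scardua1}, so there is nothing in-paper to compare against, and your attempt has to be judged against the standard argument (which is the one in that reference). Your plan is essentially that argument: the entrywise identification of (P1)--(P3) with the Maurer--Cartan equation for your $\Theta$ is correct (the $(2,1)$ entry gives (P1), the diagonal gives (P2), the $(1,2)$ entry gives (P3)), the development $f=\alpha/\gamma$ with $df=-\Om/\gamma^{2}$ correctly yields the backward implication, and the forward implication via $\Om=g_j\,df_j$, $\eta=\eta_j+A_j\Om$, and patching of the local $\xi_j$ through the $\SL(2,\bc)$ cocycle is the same local-to-global scheme as in \cite{Scardua1}. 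Two caveats. First, a sign convention: with $d\Phi=\Theta\Phi$ the integrability condition is $d\Theta=\Theta\wedge\Theta$, not $d\Theta+\Theta\wedge\Theta=0$; you should either integrate $d\Phi=\Phi\,\Theta$ (left Maurer--Cartan form) or replace $\Theta$ by $-\Theta$ --- harmless, but as written the Frobenius step does not literally apply to the flatness equation you verified. Second, the triple-overlap bookkeeping you defer to ``a bookkeeping calculation'' is precisely where the content of the forward implication lives (one must check that the locally constructed triples $(\Om,\eta,\xi_j)$ satisfying (P2)--(P3) actually agree on overlaps, using $a_{ij}d_{ij}-b_{ij}c_{ij}=1$ and the uniqueness Lemma~\ref{Lemma:ell}); you correctly identify this as the crux but do not carry it out, so that step remains asserted rather than proved.
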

\smallskip

\noindent This motivates the following definition:

\begin{Definition}{\rm Given holomorphic  one-forms (respectively,
meromorphic one-forms) $\Om$, $\eta$ and $\xi$ on $N$ we shall say
that $(\Om,\eta, \xi)$ is a {\it holomorphic projective triple\/}
(respectively, a {\it meromorphic projective triple\/}) if they
satisfy relations (P1), (P2) and (P3) above. }
\end{Definition}

With this notion Proposition~\ref{Proposition:forms} says that
$\fa$ is transversely projective on $N$ if and only if the
holomorphic  pair ($\Om$, $\eta$) may be completed to a
holomorphic projective triple. If for a holomorphic projective
triple we have $d\eta=0$ and $\xi=0$ then the projective
transverse structure is indeed an affine transverse structure (cf.
\cite{Scardua1}). Also according to \cite{Scardua1} we may  perform
modifications in a holomorphic or meromorphic projective triple as follows:

\begin{Proposition}
\label{Proposition:modificationforms}
\begin{itemize}
\item[\rm(i)] Given a meromorphic projective triple $(\Om, \eta,
\xi)$ and meromorphic functions $g$, $h$ on $N$ we can define a
meromorphic projective triple as follows:

{\rm (M1)}\,\, $\Om' = g\,\Om$

{\rm (M2)}\,\, $\eta' = \eta + \frac{dg}{g} + h\,\Om$

{\rm (M3)}\,\, $\xi' = \frac 1g\,\big(\xi - dh - h\eta -
\frac{h^2}{2}\,\Om\big)$

\item[\rm(ii)] Two holomorphic projective triples $(\Om,\eta,\xi)$
and $(\Om', \eta', \xi')$ define the same projective transverse
structure for a given foliation $\fa$ if and only if  we have {\rm
(M1), (M2)} and {\rm (M3)} for some holomorphic functions $g$, $h$
with $g$ nonvanishing.
\end{itemize}

\end{Proposition}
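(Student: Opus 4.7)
The proof of (i) is by direct verification that the new triple $(\Om',\eta',\xi')$ satisfies (P1)--(P3). Relation (P1) is immediate: $d\Om' = (dg + g\eta)\wedge\Om$ while $\eta'\wedge\Om' = g(\eta + dg/g + h\Om)\wedge\Om$, and these agree because $\Om\wedge\Om = 0$. For (P2) one uses $d(dg/g) = 0$ together with (P1) for the original triple to obtain $d\eta' = \Om\wedge(\xi - dh - h\eta)$, which equals $\Om'\wedge\xi' = g\Om\wedge\xi'$ upon substituting (M3). For (P3), set $\omega := \xi - dh - h\eta - \tfrac{h^2}{2}\Om$ so that $\xi' = \omega/g$, expand $d\xi' = -\frac{dg}{g^2}\wedge\omega + \frac{d\omega}{g}$ using (P1)--(P3) for the original triple, expand $\xi'\wedge\eta'$ using (M2), and verify that the difference collapses after algebraic simplification to $-h(\Om\wedge\xi + \xi\wedge\Om) = 0$. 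The coefficient $\tfrac{h^2}{2}$ in (M3) is precisely the one that makes the cross terms $h\,dh\wedge\Om$ and $h^2\eta\wedge\Om$ cancel.

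For (ii), the ``if'' direction follows from (i) together with the (separately verifiable) fact that the projective transverse structure is unchanged under (M1)--(M3); at the level of the local projective submersions $f_j$ encoding the structure, the modification acts as an $\SL(2,\bc)$-valued gauge change that preserves the Riccati equation determining the $f_j$. For the ``only if'' direction, assume $(\Om,\eta,\xi)$ and $(\Om',\eta',\xi')$ are holomorphic projective triples defining the same projective structure on $\fa$. Since both $\Om$ and $\Om'$ define $\fa$ and $\sing(\fa)$ has codimension $\ge 2$, there exists a unique nonvanishing holomorphic function $g$ with $\Om' = g\Om$, giving (M1). Subtracting (P1) for the two triples yields $(\eta' - \eta - dg/g)\wedge\Om = 0$, and de Rham's division lemma supplies a holomorphic $h$ with $\eta' - \eta - dg/g = h\Om$, which is (M2). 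Substituting into (P2) for the primed triple and using (P2) for the original triple gives $(g\xi' - \xi + dh + h\eta)\wedge\Om = 0$, whence $g\xi' = \xi - dh - h\eta + \alpha\Om$ for some holomorphic function $\alpha$; it remains to show $\alpha = -h^2/2$.

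The main obstacle is this last step. Imposing (P3) for the primed triple and simplifying using (P1)--(P3) for the original triple reduces to the condition $(d(\alpha + h^2/2) + (2\alpha + h^2)\eta)\wedge\Om = 0$, which is solved by $\alpha = -h^2/2$ but admits further solutions differing by functions $\beta$ with $(d\beta + 2\beta\eta)\wedge\Om = 0$. These extra solutions correspond to distinct triples sharing the same $(\Om',\eta')$ but with different Schwarzian components, and so define \emph{distinct} projective transverse structures; the hypothesis that both triples define the same structure therefore eliminates this residual freedom and forces $\alpha = -h^2/2$. Conceptually the whole statement becomes transparent once one recognizes that a projective triple is equivalent to a flat $\mathfrak{sl}(2,\bc)$-connection whose lower-left entry is $\Om$, and that the modifications (M1)--(M3) are exactly the action of the upper-triangular Borel subgroup of $\SL(2,\bc)$ by gauge transformations, with the quadratic term $-\tfrac{h^2}{2}\Om$ arising as the $b^2$-entry in the conjugation of such a matrix.
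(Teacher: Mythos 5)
The paper does not actually prove this Proposition: it is quoted from \cite{Scardua1} and used as a black box, so there is no internal proof to compare against. Judged on its own merits, your part (i) is correct and complete in outline: the three verifications of (P1)--(P3) for $(\Om',\eta',\xi')$ are exactly the right computation, and your observation that the coefficient $\tfrac{h^2}{2}$ is forced by the cancellation of the cross terms $h\,dh\wedge\Om$ and $h^2\eta\wedge\Om$ is accurate (I checked: $d\omega=\omega\wedge\eta+h\,\omega\wedge\Om$ for $\omega=\xi-dh-h\eta-\tfrac{h^2}{2}\Om$, which is what (P3)${}'$ reduces to after the $dg/g$ terms cancel).

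Part (ii) has genuine gaps in both directions. For the ``if'' direction you invoke ``the (separately verifiable) fact that the projective transverse structure is unchanged under (M1)--(M3)''---but that fact \emph{is} the ``if'' direction; deferring it to a gauge-theoretic picture of flat $\mathfrak{sl}(2,\bc)$-connections is the right heuristic but is not a verification. One must actually relate the triple to the local submersions $f_j$ (e.g.\ via the Riccati system whose solutions are the $f_j$) and check that (M1)--(M3) induce M\"obius changes of the $f_j$. For the ``only if'' direction, your derivation of (M1) and (M2) via the division lemma is fine, but the final step is where the content lies, and you resolve it by asserting that triples with the same $(\Om',\eta')$ and different $\xi'$ ``define distinct projective transverse structures.'' That assertion is precisely what must be proved, and it is not obvious: by Lemma~\ref{Lemma:ell} the ambiguity $\xi'\mapsto\xi'+F\,\Om'$ with $d\Om'=-\tfrac12\frac{dF}{F}\wedge\Om'$ really does occur, and in that situation the foliation is transversely affine off $(F=0)\cup(F=\infty)$, i.e.\ it genuinely admits more than one projective structure; distinguishing which triple corresponds to which structure requires going back to the development/submersion level, which you do not do. So the argument as written is circular at its key point: you use ``same structure $\Rightarrow$ $\alpha=-h^2/2$'' by citing ``different $\alpha$ $\Rightarrow$ different structure,'' with neither implication established.
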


\smallskip

\noindent This last proposition   implies that  suitable meromorphic
projective triples also define projective transverse structures.

\begin{Definition} {\rm A meromorphic projective triple
$(\Omega ', \eta ', \xi ')$
is {\it true\/} if it can be written locally as in  (M1), (M2) and
(M3) for some (locally defined) holomorphic projective triple
$(\Om, \eta, \xi)$ and some (locally defined) meromorphic
functions $g, h$.}
\end{Definition}

\smallskip

\noindent As an immediate consequence we obtain:

\begin{Proposition}
\label{Proposition:true} A true projective triple $(\Om',
\eta', \xi')$ defines a transversely projective foliation $\fa$
given by $\Om'$ on $N$.
\end{Proposition}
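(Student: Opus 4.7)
My plan is to construct a transverse projective atlas for $\fa$ on $N\setminus \sing(\fa)$ by gluing local structures produced by Proposition~\ref{Proposition:forms}, with compatibility on overlaps enforced by Proposition~\ref{Proposition:modificationforms}(ii). By the definition of a true meromorphic projective triple, every point $p\in N\setminus \sing(\fa)$ has a connected neighborhood $U_p$ on which there exist a holomorphic projective triple $(\Om_p,\eta_p,\xi_p)$ and meromorphic functions $g_p,h_p$ (with $g_p\not\equiv 0$) satisfying (M1), (M2), (M3). Since $\Om' = g_p\,\Om_p$ with $g_p$ meromorphic and not identically zero, the foliation $\fa$ (given by $\Om'$) coincides on $U_p$ with the one defined by $\Om_p$. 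Proposition~\ref{Proposition:forms} applied to $(\Om_p,\eta_p,\xi_p)$ then yields a holomorphic submersion $f_p\colon U_p\setminus\sing(\fa)\to \ov\bc$ whose level sets are the leaves of $\fa$, providing a candidate local transverse projective structure for $\fa$.

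Next, on an overlap $U_p\cap U_q\neq\emptyset$, the two holomorphic triples $(\Om_p,\eta_p,\xi_p)$ and $(\Om_q,\eta_q,\xi_q)$ both represent the same meromorphic triple $(\Om',\eta',\xi')$ via (M1)--(M3). Since these modifications close under composition and inversion in the meromorphic category, I can express $(\Om_p,\eta_p,\xi_p)$ as a modification of $(\Om_q,\eta_q,\xi_q)$ by some meromorphic pair $(\tilde g,\tilde h)$ on the overlap. The key observation is that both holomorphic $1$-forms $\Om_p$ and $\Om_q$ define the foliation $\fa$ there, so their ratio $\tilde g=\Om_p/\Om_q$ is a nowhere-zero holomorphic function on $U_p\cap U_q\setminus\sing(\fa)$; the relation $\eta_p = \eta_q + d\tilde g/\tilde g + \tilde h\,\Om_q$ then forces $\tilde h$ to be holomorphic there as well. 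Hence Proposition~\ref{Proposition:modificationforms}(ii) applies, and the two triples induce the same transverse projective structure on the overlap: $f_p = \phi_{pq}\circ f_q$ for a locally constant $\phi_{pq}\in\mathrm{PSL}_2(\bc)$.

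The local submersions $\{f_p\}$ together with the M\"obius cocycle $\{\phi_{pq}\}$ thus furnish a transverse projective atlas for $\fa$ on $N\setminus \sing(\fa)$, which is precisely the definition of $\fa$ being transversely projective. I expect the main obstacle to be the regularity of the transition data $(\tilde g,\tilde h)$ in the gluing step: although these are a priori only meromorphic, the joint holomorphy of $(\Om_p,\eta_p,\xi_p)$ and $(\Om_q,\eta_q,\xi_q)$ together with the fact that both forms $\Om_p,\Om_q$ define the same foliation $\fa$ promotes $\tilde g$ to a nowhere-vanishing holomorphic function, and then $\tilde h$ to a holomorphic function, on the regular locus of $\fa$, so that Proposition~\ref{Proposition:modificationforms}(ii) (which requires holomorphic modifications) applies verbatim.
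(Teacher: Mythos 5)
Your argument is correct and is exactly the reasoning the paper intends: the paper states this Proposition as an ``immediate consequence'' of the definition of a true triple together with Proposition~\ref{Proposition:forms} and Proposition~\ref{Proposition:modificationforms}(ii), and your write-up simply makes explicit the two steps left implicit there (local application of Proposition~\ref{Proposition:forms} to the holomorphic presentations, and promotion of the meromorphic transition pair $(\tilde g,\tilde h)$ to holomorphic data on overlaps so that Proposition~\ref{Proposition:modificationforms}(ii) applies). No discrepancy with the paper's approach.
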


\smallskip

\noindent The uniqueness of a meromorphic projective triple is
described by the  following lemma from \cite{Scardua1}:

\begin{Lemma}
\label{Lemma:ell} Let $(\Om,\eta,\xi)$ and $(\Om, \eta, \xi')$ be
meromorphic projective triples. Then $\xi' = \xi +F\,\Om$ for some
meromorphic function $F$ in $N$ with $d\,\Om = -\frac 12\,
\frac{dF}{F} \wedge \Om$.
\end{Lemma}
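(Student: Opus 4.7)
The plan is to exploit the fact that $\Om$ and $\eta$ are shared by both triples, so that writing out (P1), (P2), (P3) for $(\Om,\eta,\xi)$ and $(\Om,\eta,\xi')$ and subtracting isolates algebraic/differential conditions on the difference $\xi' - \xi$ alone.

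First I would use (P2) for each triple: $d\eta = \Om \wedge \xi$ and $d\eta = \Om \wedge \xi'$. Subtracting gives $\Om \wedge (\xi' - \xi) = 0$. Since $\Om$ defines a codimension one foliation (its singular set has codimension $\ge 2$), the de~Rham division lemma applied in the meromorphic category produces a globally defined meromorphic function $F$ on $N$ with
\[
\xi' - \xi \;=\; F\,\Om.
\]

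Next I would bring in (P3) for both triples: $d\xi = \xi \wedge \eta$ and $d\xi' = \xi' \wedge \eta$. Subtracting gives $d(\xi' - \xi) = (\xi' - \xi) \wedge \eta$, i.e.\ $d(F\,\Om) = F\,\Om \wedge \eta$. Expanding the left side yields $dF \wedge \Om + F\,d\Om$, and substituting (P1), $d\Om = \eta \wedge \Om$, together with the anticommutativity $\Om \wedge \eta = -\eta \wedge \Om$ turns this into
\[
dF \wedge \Om + F\,\eta \wedge \Om \;=\; -F\,\eta \wedge \Om,
\]
which rearranges to $dF \wedge \Om = -2F\,\eta \wedge \Om = -2F\,d\Om$. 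Dividing by the meromorphic function $F$ and applying (P1) once more gives $d\Om = -\tfrac12\,\tfrac{dF}{F}\wedge \Om$, as claimed.

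The only non-routine ingredient is the application of the division lemma to pass from $\Om \wedge (\xi' - \xi) = 0$ to $\xi' - \xi = F\,\Om$ with $F$ globally defined and meromorphic on $N$; this is standard in this setting because $\Om$ is nonvanishing outside a codimension two analytic set and $\xi,\xi'$ are meromorphic, so the locally defined quotients patch together consistently. Everything after that is a two-line algebraic calculation.
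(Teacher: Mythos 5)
Your argument is correct and is the standard one: the paper itself states this lemma without proof (citing \cite{Scardua1}), and your derivation --- subtract the two instances of (P2) to get $\Om\wedge(\xi'-\xi)=0$, divide by $\Om$ off its codimension-two singular set to obtain $\xi'-\xi=F\,\Om$ with $F$ meromorphic, then subtract the two instances of (P3) and use (P1) to extract $dF\wedge\Om=-2F\,d\Om$ --- is exactly the computation behind the cited result. The only point worth flagging is that the final division by $F$ presupposes $F\not\equiv 0$; when $\xi'=\xi$ the first conclusion holds with $F=0$ and the second is vacuous, which is consistent with the paper's later remark that the condition on $F$ is only used when the two triples are not identical.
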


\noindent We can rewrite the condition on $F$ as $d(\sqrt {F}
\,\Om) = 0$. This implies that if the projective triples $(\Omega,
\eta, \xi)$ and $(\Omega, \eta, \xi ^\prime)$ are not identical
then the foliation defined by $\Omega$ is transversely affine
outside the codimension one analytical invariant subset $\Lambda:=(F=0)\cup (F=\infty)$.
(\cite{Scardua1}).

\subsection{Solvable groups of local diffeomorphisms}
We state a well-known technical result.
\begin{Lemma}
\label{Lemma:linearization} Let $G < \Diff(\bc,0)$ be a solvable
subgroup of germs of holomorphic diffeomorphisms fixing the origin
$0 \in \bc$.
\begin{itemize}
\item[\rm(i)] If $G$ is nonabelian and the group of commutators
$[G,G]$ is not cyclic then $G$ is analytically conjugate to a
subgroup of $\bh_k = \big\{z \mapsto
\frac{az}{\sqrt[k]{1+bz^k}}\big\}$ for some $k \in \bn$.
\item[\rm(ii)] If $f \in G$ is of the form $f(z) = e^{2\pi
i\la}\,z +\dots$ with $\la \in\bc\backslash \bq$ then $f$ is
analytically linearizable in a coordinate that also embeds $G$ in
$\mathbb H_k$.
\end{itemize}

\end{Lemma}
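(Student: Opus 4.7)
I would combine formal normal form theory for germs in $\Diff(\bc,0)$ with a rigidity/convergence theorem tailored to subgroups of sufficient richness. Let $\ro\colon G \to \bc^*$, $f \mapsto f'(0)$, be the linear part homomorphism. Since $\bc^*$ is abelian, $[G,G] \subseteq G_1 := \Ker \ro$, so every commutator is tangent to the identity. The derived series of $G$ terminates in finitely many steps by solvability, and this will be used critically below.

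For part (i), I would exploit the non-cyclicity of $[G,G]$ to select two nontrivial tangent-to-identity germs $f_1, f_2 \in [G,G]$ that do not share a common formal one-parameter flow. By Leau--Fatou flower theory, any nontrivial $g(z) = z + a\,z^{k+1} + \cdots \in G_1$ has a pair of formal invariants, its \emph{multiplicity} $k+1$ and a \emph{residue} $b$, together with a formal conjugation onto the time-one map of the vector field $\frac{z^{k+1}}{1+bz^k}\,\frac{\po}{\po z}$, whose formal flow generates exactly the model $\bh_k$. The first step is to show that every nontrivial element of $[G,G]$ has the \emph{same} multiplicity $k+1$: any mismatch would, via iterated commutators, produce elements of strictly increasing multiplicity, contradicting termination of the derived series. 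Fixing this $k$, I would normalize $f_1$ by a formal change of coordinate $\hat\vr$; uniqueness in Leau--Fatou's classification forces the formal centralizer of $f_1$ (hence all of $G$, by the nested-commutator structure) to embed formally into $\bh_k$.

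The main obstacle is proving that $\hat\vr$ actually converges. Here I would invoke the rigidity theorem of Cerveau--Moussu--Nakai: when $[G,G]$ is non-cyclic, the pseudo-group generated by $G_1$ has orbits accumulating to $0$ along a set that is \emph{not} contained in any analytic curve. Any formal conjugation between two convergent germs that have a non-discrete common set of coincidences of iterates must converge, for otherwise a divergent formal series would agree with a holomorphic function on a non-discrete set, contradiction. Non-cyclicity of $[G,G]$ is exactly what guarantees this density, so $\hat\vr$ is holomorphic and $G$ is analytically conjugate to a subgroup of $\bh_k$.

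For part (ii), suppose $f \in G$ has linear part $e^{2\pi i \la}$ with $\la \in \bc \setminus \bq$. If hypothesis (i) applies, $G$ is already embedded in $\bh_k$, so $f(z) = \frac{az}{\sqrt[k]{1+b z^k}}$ with $a = e^{2\pi i\la}$; a direct calculation with $f^n$ shows that the \emph{residue-like} coefficient at step $n$ equals $\frac{1-a^{-nk}}{1-a^{-k}}\,b$, and non-resonance of $\la$ makes this orbit of residues dense unless $b=0$, forcing $b = 0$ and hence analytic linearization in the same coordinate embedding $G$ into $\bh_k$. If the hypothesis of (i) fails, $G$ is either abelian or has cyclic $[G,G]$; in either case $f$ lies in the centralizer of a small commutative subgroup, and the formal linearization of $f$ is forced to converge by the same rigidity argument applied to the pair $(f, g)$ for any $g$ not commuting formally with the linearization, which simultaneously places $G$ into $\bh_k$.
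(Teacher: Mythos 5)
Your part (i) is an outline of the Cerveau--Moussu argument that the paper simply cites: the paper's proof of (i) is the single sentence ``(i) is in \cite{Cerveau-Moussu}''. Your sketch is in the right spirit, but two of its pivotal steps are asserted rather than proved (that all nontrivial elements of $[G,G]$ have the same multiplicity ``by termination of the derived series'', and that the formal conjugation converges ``by rigidity''); since the intended content of the lemma here is just to quote that theorem, this is acceptable, if redundant.

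Part (ii), however, contains a genuine error. You compute that $f^n$ has coefficient $b_n=\frac{1-a^{-nk}}{1-a^{-k}}\,b$ and claim that density of these values forces $b=0$. No contradiction follows from that density: each $f^n$ is a perfectly good element of $\bh_k$, and nothing prevents the set $\{b_n\}$ from being dense in a circle or unbounded. More to the point, $b$ is \emph{not} a conjugacy invariant of $f$ when $a^k\neq 1$, so there is nothing to force $b=0$ in the given coordinate, and the lemma does not claim that: it claims $f$ is linearizable in a possibly \emph{new} coordinate that still embeds $G$ in $\bh_k$. The paper's argument is short and correct: writing $w=z^k$, the germ $f$ induces the homography $H(w)=\frac{a^kw}{1+bw}$ with multiplier $a^k=e^{2\pi i k\la}\neq 1$ at $w=0$; any homography with a fixed point of multiplier $\neq 1$ is conjugate \emph{by another homography} to its linear part, and pulling this conjugation back through the $k$-th root gives an analytic linearization of $f$ by an element of the normalizer of $\bh_k$, so $G$ stays inside $\bh_k$. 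Your fallback for the case where the hypotheses of (i) fail (``the formal linearization is forced to converge by the same rigidity argument'') is too vague to stand as a proof; note that the paper's own proof of (ii) simply assumes $G$ has already been placed in $\bh_k$ by (i).
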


\begin{proof} (i) is in \cite{Cerveau-Moussu}. Given
$f \in G$ as in (ii) then by (i) we can write $f(z) =
\frac{e^{2\pi i\la}\,z}{\sqrt[k]{1+bz^k}}$ for some $k \in \bn$,
\, $b \in \bc$. Since $\la \in \bc\backslash\bq$ the homography
$H(z) = \frac{e^{2\pi i\la}\,z}{1+bz}$ is conjugate by another
homography to its linear part $z \mapsto e^{2\pi i\la}\,z$ and
therefore $f$ is analytically linearizable.
\end{proof}

\vglue .3in

\subsection{Extension to irreducible singularities}

\label{section:extensionirreducible}

Throughout this section $\fa$ will denote a holomorphic foliation
induced by a holomorphic one-form $\Omega$ defined on
a neighborhood of the origin $0 \in \bc^2$ and such
that $\sing(\fa) = \{0\}\in \bc^2$. Denote by $\sep(\fa,0)$ the
germ of all the separatrices of $\fa$ through $0\in\bc^2$.
We assume that the origin is an irreducible singularity. This
means that in suitable local coordinates $(x,y)$ in a neighborhood
$U$ of the origin, we have local normal forms for the restriction
$\fa\big|_{U}$  given by (Irr.1) or (Irr.2).


\begin{Lemma}[nonresonant {\it linearizable} case]
\label{Lemma:linearizable} Suppose that
$\Om= g(xdy-\la ydx)$
for some holomorphic nonvanishing function $g$ in $U$, and
$\lambda \in \bc \setminus \mathbb Q_+$. Let $F$ be a meromorphic function in
$U^*=U\setminus \{xy=0\}$, such that
$d\Om = -\frac 12\,\frac{dF}{F}\wedge \Om$.
If $\lambda \notin\mathbb Q$ then $F$ extends
 to $U$ as a meromorphic function,  $F = \tilde c.(gxy)^{-2}$ for some
constant $\tilde c \in \bc$.
\end{Lemma}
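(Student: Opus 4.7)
The plan is to exhibit a particular meromorphic solution $F_0$ of the equation $d\Om = -\tfrac{1}{2}\tfrac{dF}{F}\wedge\Om$ that is globally defined on $U$, and then show that $F$ and $F_0$ can differ only by a multiplicative constant. A direct computation verifies that $F_0 := (gxy)^{-2}$ works: one has $-\tfrac{1}{2}\tfrac{dF_0}{F_0} = \tfrac{dg}{g}+\tfrac{dx}{x}+\tfrac{dy}{y}$, and wedging with $\Om = g(xdy - \la y dx)$ (using $\tfrac{dx}{x}\wedge xdy = dx\wedge dy$ and $\tfrac{dy}{y}\wedge(-\la y dx) = \la\,dx\wedge dy$) gives $dg\wedge(xdy-\la y dx) + (1+\la)g\,dx\wedge dy$, which equals $d\Om$ by a direct differentiation of $\Om$.

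Given such a particular solution, set $h := F/F_0$, a meromorphic function on $U^*$ (since $F_0$ has neither zeros nor poles on $U^*$). Subtracting the two logarithmic-derivative identities satisfied by $F$ and $F_0$ yields $\tfrac{dh}{h}\wedge\Om = 0$ on $U^*$; in other words, $h$ is a meromorphic first integral of $\fa|_{U^*}$.

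The key step, and the main obstacle, is to show that the irrationality hypothesis forces any such first integral to be constant. I would pick a transversal $\Sigma = \{x=x_0\}$ through a regular point $(x_0,0)$ of $\fa$ with $x_0$ small and nonzero, and expand $h|_\Sigma$ as a convergent Laurent series $\sum_{n\in\bz} a_n y^n$ on the punctured disk $\{0<|y|<r\}$. The holonomy of $\fa$ along a small loop around $0$ in the separatrix $\{y=0\}$ acts on $\Sigma$ by the linear map $y\mapsto e^{2\pi i\la}y$; since $h$ is a first integral it is invariant under this action, forcing $a_n(e^{2\pi in\la}-1)=0$ for every $n\in\bz$. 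The hypothesis $\la\notin\bq$ makes $e^{2\pi in\la}\ne 1$ for every $n\ne 0$, so $a_n=0$ for $n\ne 0$ and $h|_\Sigma$ is constant. Because every leaf of $\fa|_{U^*}$ meets $\Sigma$, the function $h$ is globally equal to some constant $\tilde c\in\bc$.

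Therefore $F = \tilde c\,(gxy)^{-2}$ on $U^*$, and this explicit formula visibly defines a meromorphic extension of $F$ to all of $U$. The computational verification of the particular solution and the passage from local constancy on $\Sigma$ to global constancy on $U^*$ are routine; it is the holonomy/Laurent-series rigidity that does the real work.
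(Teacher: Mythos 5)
Your proof is correct, and it reaches the conclusion by a cleaner route than the paper's. Both arguments hinge on the same core rigidity fact -- that $\la\notin\bq$ forbids nonconstant meromorphic first integrals of $xdy-\la ydx$ on $U^*$ -- but you package the rest differently. The paper works additively at the level of one-forms: it introduces $\eta_0=\frac{d(xyg)}{xyg}+a(\frac{dy}{y}-\la\frac{dx}{x})$ with a free parameter $a$, shows $\Theta=-\frac12\frac{dF}{F}-\eta_0$ equals $c\,(\frac{dy}{y}-\la\frac{dx}{x})$ with $c$ constant (via a two-variable Laurent expansion giving $(i+\la j)h_{ij}=0$), and then needs two residue computations, one to choose $a$ so that $c=0$ and another along both axes to force $a=0$. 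You instead exhibit the explicit particular solution $F_0=(gxy)^{-2}$ and form the multiplicative quotient $h=F/F_0$, so that the constant you produce is literally the ratio $F/F_0$ and the formula $F=\tilde c\,(gxy)^{-2}$ drops out with no residue bookkeeping at all; your rigidity step uses the holonomy $y\mapsto e^{2\pi i\la}y$ on a transversal rather than a bidisc Laurent series, which is equivalent. Two small points you should patch: (a) before expanding $h|_\Sigma$ in a Laurent series on a punctured disc you must rule out poles of $h$ accumulating at $\Sigma\cap\{y=0\}$; as the paper does for $F$ at the outset, note that the polar set of $h$ is $\fa$-invariant and hence, since the axes are the only local invariant curves when $\la\notin\bq$, empty in $U^*$. (b) The assertion that \emph{every} leaf of $\fa|_{U^*}$ meets $\Sigma$ is not literally true for all shapes of $U$ and all $\la$; but it is unnecessary -- $h$ is constant on the (nonempty, open) saturation of $\Sigma^*$ and $U^*$ is connected, so the identity theorem already gives $h\equiv\tilde c$ on $U^*$.
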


\begin{proof}
First we remark that from equation $d\Om = -\frac
12\,\frac{dF}{F}\wedge \Om$ we have that the set of poles of $F$ is
invariant by $\Omega$. Therefore, since the only separatrices of the
form $\Omega$ in $U$ are the coordinates axes, we can assume that
$F$ is holomorphic in $U^*$.
 Fix a complex number
$a\in \bc$ and introduce the one-form $\eta_0 = \frac{d(xyg)}{xyg}
+ a(\frac{dy}{y} - \lambda \frac{dx}{x})$ in $U$. Since
$d(\frac{\Omega}{gxy})= \frac{dy}{y} - \lambda \frac{dx}{x}$ is
closed it follows that $d\Omega= \eta_0 \wedge \Omega$. Thus the
one-form $\Theta:= -\frac 12\,\frac{dF}{F} -  \eta_0$ is closed
meromorphic in $U^*$ and satisfies $\Theta \wedge \Omega = d
\Omega - d \Omega = 0$. This implies  that $\Theta \wedge
(\frac{dy}{y} - \lambda \frac{dx}{x}) =0$ in $U^*$ and therefore
we have $\Theta =h. (\frac{dy}{y} - \lambda \frac{dx}{x})$ for
some meromorphic function $h$ in $U$. Taking exterior derivatives
we conclude that $dh\wedge( \frac{dy}{y} - \lambda
\frac{dx}{x})=0$ in $U^*$ and therefore $h$ is a meromorphic first
integral for $\Omega$ in $U^*$. Since $\lambda \notin \mathbb Q$
we must have $h=c$, a constant: indeed, write $h= \sum\limits_{i,j
\in\mathbb Z} h_{ij} x^iy^j$ in Laurent series in a small bidisc
around the origin. Then from $dh\wedge( \frac{dy}{y} - \lambda
\frac{dx}{x})=0$ we obtain $(i + \lambda j)h_{ij}=0, \, \forall
(i,j) \in \mathbb Z\times \mathbb Z$ and since $\lambda \notin
\mathbb Q$ this implies that $h_{ij}=0, \, \forall (i,j) \ne
0 \in \bc^2$.

This already shows that the one-form $\Theta$ always extends as a
meromorphic one-form with simple poles to $U$ and therefore the
function $F$ extends as a  meromorphic function to $U$. The
residue of $\Theta$ along the axis $\{y=0\}$ is given by
$\Res_{\{y=0\}} \Theta = -\Res_{\{y=0\}} \frac{1}{2}\frac{dF}{F}
 - \Res_{\{y=0\}} \eta_0= - \frac{1}{2} k - (1+a)$ where
 $k\in \mathbb N$ is the order of
$\{y=0\}$ as a set of zeroes of $F$ or minus the order of
$\{y=0\}$ as a set of poles of $F$. Thus by a suitable choice of
$a$ we can assume that $\Res_{\{y=0\}} \Theta=0$ and therefore by
the expression $\Theta =c (\frac{dy}{y} - \lambda \frac{dx}{x})$
we conclude that, for such a choice of $a$, we have $0=\Theta=
-\frac 12\,\frac{dF}{F} -  \eta_0$ and thus $ -\frac
12\,\frac{dF}{F} =  \frac{dx}{x} + \frac{dy}{y} + \frac{dg}{g} +
a(\frac{dy}{y} - \lambda\frac{dx}{x})$ and therefore, comparing
residues along the axes $\{y=0\}$ and $\{x=0\}$ we obtain that
$1+a\in \mathbb Q$ and $ 1 - a\lambda \in \mathbb Q$. Since
$\lambda \notin\mathbb Q$ the only possibility is $a=0$. This
proves that indeed $-\frac 12\,\frac{dF}{F} = \frac{dx}{x} +
\frac{dy}{y} + \frac{dg}{g}$ in $U$ and integrating this last
expression we obtain  $F = \tilde c (gxy)^{-2}$ for some constant
$\tilde c \in \bc$. This proves the lemma. \vglue.2in
\end{proof}

\begin{Remark}
\label{Remark:Touzet} {\rm {\bf (i)} According to \cite{Touzet},
Theorem II.3.1, {\it a nondegenerate nonresonant singularity $xdy -
\lambda y dx + \Omega_2(x,y)=0, \, \lambda \in \mathbb C \setminus
\mathbb Q_+$,  is analytically linearizable if and only if the
corresponding foliation $\fa$ is transversely projective in
$U\setminus \sep(\fa,U)$ for some neighborhood $U$ of the
singularity}. {\bf (ii)}  Let now $\fa$ be of resonant type or of
saddle-node type.  According to \cite{Touzet}, Theorem II.4.2, {\it
the foliation admits a meromorphic projective triple near the
singularity if and only if on a neighborhood of $0\in \mathbb C^2$,
$\fa$ is the pull-back of a Riccati foliation on $\ov \bc \times \ov
\bc$ by a meromorphic map}. The proof of this theorem is based on
the study and classification of the Martinet-Ramis cocycles of the
singularity expressed in terms of some classifying holonomy map of a
separatrix of the singularity. For a resonant singularity any
 of the two separatrices has a {\it classifying holonomy} ({\it i.e.},
 the analytical conjugacy class of the singularity germ is determined by
 the analytical conjugacy class of the holonomy map of the separatrix) and for a
saddle-node it is necessary to consider the strong separatrix
holonomy map. Thus we conclude that the proof given in
\cite{Touzet}  works if we only assume the existence of a
meromorphic projective triple $(\Om', \eta', \xi')$ on a
neighborhood $U_0$ of $ \Lambda\setminus (0,0)$, where
$\Lambda \subset \sep(\fa,U)$ is any separatrix in the resonant case,
and the strong separatrix if the origin is a saddle-node.

 }
\end{Remark}

\begin{Lemma}[general nonresonant case]
\label{Lemma:noresonantextends}  Suppose that the origin is a
nondegenerate nonresonant singularity of the foliation $\fa$. Assume
that $\fa$ is transversely projective on $U\setminus \sep(\fa,U)$.
Let $\eta$ be a meromorphic one-form on $U$ and $\xi$ be a
meromorphic one-form on $U\setminus \sep(\fa,U)$ such that on $U
\setminus \sep(\fa,U)$ the one-forms $\Omega, \eta, \xi$ define a
true projective triple. Then $\xi$ extends as a meromorphic one-form
to $U$.
\end{Lemma}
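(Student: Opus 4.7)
The plan is to exhibit a meromorphic projective triple $(\Omega, \eta, \xi')$ defined on all of $U$ and then compare it with the given triple $(\Omega, \eta, \xi)$ by means of Lemma~\ref{Lemma:ell} and Lemma~\ref{Lemma:linearizable}.

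Since $(\Omega,\eta,\xi)$ is a true projective triple on $U^{*}:=U\setminus \sep(\fa,U)$, Proposition~\ref{Proposition:true} yields that $\fa$ is transversely projective there, and by Remark~\ref{Remark:Touzet}(i) (Touzet's theorem) the singularity is then analytically linearizable. I would work in linearizing coordinates, so that $\Omega = g\,(x\,dy - \lambda y\,dx)$ with $g$ a non-vanishing holomorphic function on $U$ and $\sep(\fa,U) = \{xy=0\}$. Writing $x\,dy - \lambda y\,dx = xy\bigl(\tfrac{dy}{y} - \lambda\tfrac{dx}{x}\bigr)$ and using that $\tfrac{dy}{y} - \lambda\tfrac{dx}{x}$ is closed, a short computation gives
$$
d\Omega = \eta_0 \wedge \Omega, \qquad \eta_0 := \frac{dg}{g} + \frac{d(xy)}{xy}, \qquad d\eta_0 = 0,
$$
so that $(\Omega, \eta_0, 0)$ is a meromorphic projective triple on $U$ (which is, tautologically, transversely affine).

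From $\eta \wedge \Omega = d\Omega = \eta_0 \wedge \Omega$ it follows that $(\eta - \eta_0)\wedge \Omega = 0$, hence $\eta - \eta_0 = h\,\Omega$ for some function $h$ meromorphic on $U \setminus \sing(\fa) = U \setminus \{0\}$; since $\{0\}$ has codimension two in $\bc^{2}$, $h$ extends meromorphically to $U$ by Levi's extension theorem. Applying the modifications (M2)--(M3) of Proposition~\ref{Proposition:modificationforms} to $(\Omega, \eta_0, 0)$ with the choice $g\equiv 1$ and this function $h$ produces a meromorphic projective triple $(\Omega, \eta, \xi')$ on the whole of $U$ with
$$
\xi' := -dh - h\,\eta_0 - \tfrac{h^{2}}{2}\,\Omega .
$$

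On $U^{*}$ both $(\Omega,\eta,\xi)$ and $(\Omega,\eta,\xi')$ are meromorphic projective triples with the same first two entries, so Lemma~\ref{Lemma:ell} gives $\xi - \xi' = F\,\Omega$ where $F$ is meromorphic on $U^{*}$ and satisfies $d\Omega = -\tfrac{1}{2}\,\tfrac{dF}{F}\wedge \Omega$. The nonresonant condition forces $\lambda \in \bc\setminus \bq$, so Lemma~\ref{Lemma:linearizable} applies and $F$ extends meromorphically to all of $U$ (in fact as $F = \tilde c\,(gxy)^{-2}$). Consequently $\xi = \xi' + F\,\Omega$ is a meromorphic extension of $\xi$ to $U$, as required. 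The delicate point in the whole argument is controlling the auxiliary functions $h$ and $F$ across the separatrix set: for $h$ the reduction to an isolated singularity makes it a direct application of Levi's theorem, but for $F$ the extension across $\{xy=0\}$ is exactly the content of Lemma~\ref{Lemma:linearizable}, which is the real workhorse of the proof; the remaining steps are a bookkeeping use of the modification rules (M1)--(M3).
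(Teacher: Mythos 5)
Your proposal is correct and follows essentially the same route as the paper: linearize the singularity via Remark~\ref{Remark:Touzet}(i), build the auxiliary triple $(\Omega,\eta_0,0)$ with $\eta_0=\frac{dg}{g}+\frac{dx}{x}+\frac{dy}{y}$, modify it via Proposition~\ref{Proposition:modificationforms} to match $\eta$, and then control the discrepancy $F\,\Omega$ with Lemma~\ref{Lemma:ell} and Lemma~\ref{Lemma:linearizable}. The only cosmetic difference is that the paper first disposes of the Poincar\'e-domain case by the Poincar\'e linearization theorem and reserves Touzet's result for the Siegel case, while you invoke Touzet uniformly; both are valid.
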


\begin{proof} By hypothesis  the foliation is given in suitable local
coordinates around the origin by $xdy - \la ydx + \omega_2(x,y) = 0$
where $\la \in \bc\backslash\bq$\,, \, $\omega_2(x,y)$ is a
holomorphic one-form of order $\ge 2$ at $0\in \bc^2$.
\begin{Claim} The singularity is analytically linearizable.
\end{Claim}

Indeed, if  $\la \notin \re_-$ then the singularity is in the
Poincar\'e domain with no resonance and  by
Poincar\'e-Linearization Theorem the singularity is analytically
linearizable.  Assume now that $\la \in \re_-\backslash\bq_-$\,.
In this case the singularity is in the Siegel domain and, a
priori, it is not clear that the singularity is linearizable.
Nevertheless, by hypothesis  $\fa$ is transversely projective in
$U^* = U\setminus \sep(\fa,U)$ and  by Remark~\ref{Remark:Touzet}
(i) the singularity $0\in \bc^2$ is analytically linearizable.
This proves the claim.

\vglue.1in

Therefore we can suppose that  $\Om\big|_U = g(xdy-\la ydx)$ for
some holomorphic nonvanishing function $g$ in $U$.  We define
$\eta_0 = \frac{dg}{g} + \frac{dx}{x}  + \frac{dy}{y}$ in $U$.
Then $\eta_0$ is meromorphic and satisfies $d\Om = \eta_0 \wedge
\Om$ so that $\eta = \eta_0 + h\Om$ for some meromorphic function
$h$ in $U$. We also take $\xi_0 = 0$ so that $d\eta_0 = 0 = \Om
\wedge \xi_0$ and $d\xi_0 = 0 = \xi_0 \wedge \eta$. The triple
$(\Omega,\eta_0,\xi_0)$ is a meromorphic projective triple in $U$
so that according to
Proposition~\ref{Proposition:modificationforms} we can define a
meromorphic projective triple $(\Om,\eta,\xi_1)$ in $U$  by
setting $\xi_1 = \xi_0 - dh - h\eta_0 - \frac{h^2}{2}\,\Om = -dh -
h\eta_0 - \frac{h^2}{2}\,\Om$. Then we have by
Lemma~\ref{Lemma:ell} that $\xi = \xi_1 + \ell.\Om$ for some
meromorphic function $\ell$ in $U^*$ such that $d\Om = -\frac
12\,\frac{d\ell}{\ell} \wedge \Om$.

By Lemma~\ref{Lemma:linearizable} above we have $\ell =\tilde c
.(gxy)^{-2}$ in $U^*$ and therefore $\xi$ extends to $U$ as $\xi =
\xi_1 + \tilde c. (gxy)^{-2}$ in $U^*$. This proves the lemma.
\end{proof}

\subsection{Extension from a separatrix of an irreducible singularity}

\begin{Lemma}
\label{Lemma:ellextension} Let $\Omega$ be a holomorphic one-form of type
{\rm(Irr.1)} or {\rm(Irr.2)}
defined on $U$. Let $ S\subset \sep(\fa,U)$  be a separatrix of
$\fa\big|_{U}$  which is  a strong  manifold of $\fa$, in case $0\in
\bc^2$ is a saddle-node. Let $F$ be a meromorphic function in $U$
minus the other separatrix of $\fa$ in $U$ such that $d\,\Om =
-\frac 12\, \frac{dF}{F} \wedge \Om$. Then $F$ extends as a
meromorphic function to $U$; indeed we have the following
possibilities for $\Omega$ and $F$ in suitable coordinates in a
neighborhood of the origin:
\begin{itemize}

\item[{\rm (i)}] $\Omega=g(xdy - \lambda ydx)$ for some $\lambda
\in \bc\setminus \{0\}$ and some meromorphic function $g$. If
$\{\lambda, \frac{1}{\lambda}\}\cap \mathbb N= \emptyset$ then
$F=\tilde c. (gxy)^{-2}$ for some constant $\tilde c$. If $\la =
-\frac{k}{\ell}\in \mathbb Q_-$ where $k,\ell \in \mathbb N, \,
<k,\ell>=1$ then $F=\tilde c (gxy)^{-2} . \,\vr(x^k y^\ell)$ for
some constant $\tilde c\in \bc$ and some meromorphic function
$\vr(z)$ in a neighborhood of the origin $0 \in \bc$.

\item[{\rm (ii)}] $\Omega=g.\Om_{1,\ell} = g( y\,dx + \ell x(1 +
\frac{\sqrt{-1}}{2\pi}\, xy^\ell)dy)$ where $\ell\in \mathbb N$
and $g$ is meromorphic. We have $F=\tilde c. (gx^{2}
y^{\ell+1})^{-2}$ for some constant $\tilde c$.

\item[{\rm (iii)}] $\Omega=g.\Omega_{(2)}=g(xdy - y^2dx)$ for some $g$
meromorphic. We have $F=\tilde c. (gxy^2)^{-2}$ for some constant
$\tilde c$.
\end{itemize}

In all cases $ S$ is given by $\{y=0\}$ and the function $F$
extends as meromorphic function to a neighborhood of the origin.
\end{Lemma}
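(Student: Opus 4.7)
The plan in all three cases is to repeat the strategy used in Lemma~\ref{Lemma:linearizable}: fix a meromorphic one-form $\eta_0$ on $U$ with $d\Omega=\eta_0\wedge\Omega$, set $\Theta:=-\tfrac12\tfrac{dF}{F}-\eta_0$, observe that $\Theta$ is closed and satisfies $\Theta\wedge\Omega=0$ on $U\setminus\sep(\fa,U)$, and from there pin down $F$. Before running this argument I would first place $\Omega$ into a convenient normal form adapted to the separatrix $S$, which we arrange to be $\{y=0\}$. For a nondegenerate singularity with $\lambda\in\bc\setminus\bq_+$, Poincar\'e linearization in the Poincar\'e domain and Touzet's theorem (Remark~\ref{Remark:Touzet}(i)) in the Siegel domain give coordinates in which $\Omega=g(xdy-\lambda y dx)$ with $g$ holomorphic nonvanishing; for a saddle-node, the Martinet--Ramis classification, applied under the existence of $F$ (Remark~\ref{Remark:Touzet}(ii)), reduces $\Omega$ up to a meromorphic factor $g$ to either $g\,\Omega_{1,\ell}$ when a central manifold exists, or $g\,\Omega_{(2)}$ when it does not.

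In case (i) I would take $\eta_0=\frac{d(gxy)}{gxy}+a\bigl(\frac{dy}{y}-\lambda\frac{dx}{x}\bigr)$, exactly as in Lemma~\ref{Lemma:linearizable}. Closedness of $\Theta$ combined with $\Theta\wedge\Omega=0$ yields $\Theta=h\bigl(\frac{dy}{y}-\lambda\frac{dx}{x}\bigr)$ for some meromorphic $h$ satisfying $dh\wedge(\frac{dy}{y}-\lambda\frac{dx}{x})=0$. When $\lambda\notin\bq$, the Laurent coefficient equations $(i+\lambda j)h_{ij}=0$ force $h$ to be constant, and Lemma~\ref{Lemma:linearizable} delivers $F=\tilde c(gxy)^{-2}$. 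When $\lambda=-k/\ell\in\bq_-$ with $\gcd(k,\ell)=1$, the same equations $(i\ell-jk)h_{ij}=0$ only annihilate coefficients with $i\ell\neq jk$, so $h$ is an arbitrary meromorphic function of $u:=x^ky^\ell$. Writing $\frac{dy}{y}-\lambda\frac{dx}{x}=\tfrac1\ell\frac{du}{u}$, integrating $\tfrac{dF}{F}=-2\eta_0-\tfrac{2}{\ell}h(u)\frac{du}{u}$, and choosing $a$ so that the residue of $\Theta$ along $\{y=0\}$ vanishes (which is what provides the meromorphic extension of $F$ across $S$), yields $F=\tilde c(gxy)^{-2}\vr(x^ky^\ell)$ for a meromorphic germ $\vr$ at the origin of $\bc$.

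For cases (ii) and (iii) the natural candidates are $\eta_0=\frac{d(gx^2y^{\ell+1})}{gx^2y^{\ell+1}}$ and $\eta_0=\frac{d(gxy^2)}{gxy^2}$ respectively, each modified by a suitable closed multiple of $\Omega_{1,\ell}/gx^2y^{\ell+1}$, resp.\ $\Omega_{(2)}/gxy^2$, so that $d\Omega=\eta_0\wedge\Omega$. The general argument then runs as above, but with a crucial simplification at the end: the model saddle-node foliations $\Omega_{1,\ell}$ and $\Omega_{(2)}$ admit \emph{no} non-constant meromorphic first integral in any neighborhood of the origin, so the analogue of $h$ is automatically a constant. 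A residue calculation along $\{y=0\}$ forces this constant to be zero, producing $F=\tilde c(gx^2y^{\ell+1})^{-2}$ and $F=\tilde c(gxy^2)^{-2}$ respectively, with automatic meromorphic extension of $F$ across $S$ from the explicit formulas.

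The main obstacle I expect is not these computations but the normalization step in the saddle-node cases: one has to justify that the hypothesis on $F$ as a meromorphic function on $U$ minus the \emph{other} separatrix (and not on $U$ minus $S$) is strong enough to force $\Omega$ into one of the two Martinet--Ramis models up to a meromorphic multiplier. This is precisely where the fact that $S$ is the strong manifold is used: the classifying holonomy for a saddle-node is that of the strong separatrix, and the existence of $F$ along $S$ provides the projective-type invariant that freezes that holonomy into the two listed normal forms, via Touzet's cocycle classification (Remark~\ref{Remark:Touzet}(ii)). Once this normalization has been achieved, the residue analysis and the determination of first integrals of the model foliations are routine.
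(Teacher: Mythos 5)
There is a genuine gap in the first half of your argument: the reduction of $\Om$ to the three analytic models. You assign the model $\Om_{1,\ell}$ to ``a saddle-node with a central manifold'' and $\Om_{(2)}$ to a saddle-node without one. This is a misattribution: $\Om_{1,\ell}=y\,dx+\ell x(1+\frac{\sqrt{-1}}{2\pi}xy^\ell)dy$ is the Martinet--Ramis model of a \emph{nondegenerate resonant} singularity ($\la=-1/\ell$) that is \emph{not} analytically linearizable, while the saddle-node case (with $S$ the strong manifold) always leads to $\Om_{(2)}$ alone. Consequently the resonant nondegenerate nonlinearizable case --- which is exactly where item (ii) of the statement comes from --- has no normalization argument in your proposal, and your trichotomy does not match the one you must prove. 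Moreover, the tools you invoke do not deliver the conjugacies you need: Remark~\ref{Remark:Touzet}(ii) only says the germ is a meromorphic pull-back of a Riccati foliation, which is far from an analytic conjugacy to $\Om_{(2)}$; and a general saddle-node's strong holonomy is merely tangent to the identity, which by itself does not force conjugacy to $z\mapsto z/(1+z)$.

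The mechanism the paper actually uses, and which your proposal is missing, is that the closed one-form $\eta=-\frac12\frac{dF}{F}$ has polar set contained in $S$ with order at most one, so its residue along $S$ is $-\frac12 k$ with $k$ the order of $S$ as zero or pole of $F$; Lemma 3.1 of \cite{Scardua1} then forces the holonomy of $L_0=S\setminus\{0\}$ to be either analytically linearizable or conjugate to a ramified homography $z\mapsto \alpha z/(1+\beta z^t)^{1/t}$. Combined with Lemma~\ref{Lemma:linearization}(ii) when the multiplier is not a root of unity, and with the Martinet--Ramis rigidity statement that the holonomy of a non-central separatrix classifies the germ, this is what pins the singularity into exactly the three models (i)--(iii). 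Your appeal to Poincar\'e linearization plus Touzet in the nondegenerate case covers only the nonresonant subcase; the resonant subcase and the saddle-node subcase both require the residue/holonomy argument above. The second half of your proposal (the choice of $\eta_0$, the equation $\Theta\wedge\Om=0$, the Laurent-series determination of the first integral $H$, and the conclusion that $H$ is constant except in the resonant case where it is a function of $x^ky^\ell$) does follow the paper's computation and is essentially correct once the normal forms are established.
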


\begin{proof}
 We define the one-form  $\eta=-\frac 12\,
\frac{dF}{F}$. Then $\eta$ is a closed meromorphic one-form in
$U\setminus [\sep(\fa,U)\setminus  S]$ such that $d\Omega=
\eta\wedge \Omega$, moreover the polar set of $\eta$ is contained in
$ S$ and has order at most one. If $\eta$ is holomorphic in
$U\setminus [\sep(\fa,U)\setminus  S]$ then the foliation $\fa$ is
transversely affine there and therefore the holonomy map $h$ of the
leaf $L_0=  S\setminus\{0\}$ is linearizable. Since the origin is
irreducible and $ S$ is not a central manifold, the conjugacy class
of this holonomy map classifies the foliation up to analytic
conjugacy (\cite{Martinet-Ramis1},\cite{Martinet-Ramis2}). Thus
the singularity is itself linearizable. Assume now that
$(\eta)_\infty \ne \emptyset.$ In this case we have the residue of
$\eta$ along $ S$ given by $\Res_{ S}\eta= -\frac12 \, k$ where $k$
is either the order of $ S$ as zero of $F$ or minus the order of $
S$ as pole of $F$. We have two possibilities:

(a) If $-\frac12 \, k \notin \{2,3,...\}$ then according to
\cite{Scardua1} Lemma 3.1 the holonomy map of the leaf $L_0$ is
linearizable and as above the singularity itself is linearizable.

(b) If  $-\frac12 \, k = t +1\ge 2$ for some $t\in \mathbb N$ then
by \cite{Scardua1} Lemma 3.1  there is a conjucacy between the holonomy
map of $L_0$ and a map of the form $h(z)= \frac{\alpha z}{(1+ \beta
z^t)^{\frac{1}{t}}}$, i.e., this is a finite ramified covering of
an homography.

\noindent{\bf 1st. case}: Suppose that the singularity is
nondegenerate, say $\Omega = xdy - \lambda ydx +...$.  If $\{\la,
\frac{1}{\lambda}\} \cap \mathbb N = \emptyset$ then  $a = h'(0) =
e^{2\pi i/\la}\ne 1$ and by Lemma~\ref{Lemma:linearization} (ii) the
holonomy map $h$ is analytically linearizable. Therefore, as
remarked above, in this case the singularity $q_{j_0} \in
\sing(\fa)$ is analytically linearizable. Thus we can assume that
$\lambda = -\frac{1}{\ell}$ for some $\ell\in \bn$. In  this case,
either the holonomy is the identity (and therefore linearizable) or
there is an analytical conjugacy to the corresponding holonomy of the
separatrix $(y=0)$ for the germ of  a singularity $ \Om_{k,\ell} = k
y\,dx + \ell x(1 + \frac{\sqrt{-1}}{2\pi}\, x^ky^\ell)dy$ for $k=1$;
such a singularity is called a {\it nonlinearizable resonant
saddle}. Therefore, by \cite{Mattei-Moussu} and
\cite{Martinet-Ramis1} we may assume that $\fa|_U$ is of the form
$\Om_{1,\ell} = 0$ in the variables $(x,y) \in U$.

\vglue.1in

\noindent{\bf 2nd. case}:  Now we consider the case for which the
singularity is a saddle-node. By hypothesis, $ S$ is the strong
manifold of the saddle-node and therefore its holonomy $h$ is
tangent to the identity and thus it is analytically conjugated to
$z\to \frac{z}{1+z}$ which is conjugated to the corresponding
holonomy map of the separatrix $(y=0)$ for the saddle-node
$\Omega_{(2)} = y^{2} dx - x dy$
so that by \cite{Martinet-Ramis2} the foliation $\fa$ is analytically
conjugated to $\Omega_{(2)}$ in a neighborhood of the origin.

\vglue.1in So far we have proved that the singularity is either
analytically linearizable,  analytically conjugated to
$\Omega_{1,\ell}=0$ if  it is resonant and not analytically
linearizable, or analytically
 conjugated to $\Omega_{(2)}=0$ if it is a saddle-node. We shall now
 work with these three models in order to conclude the extension
 of $F$ to $U$.

\noindent (i) In the linearizable case we can write $ S: \{y=0\}$
and  $\Omega= g(xdy - \lambda ydx)$
 for some holomorphic nonvanishing function $g$ in $U$.
 If $\lambda \notin \mathbb Q$
 then by Lemma~\ref{Lemma:linearizable} $F$ extends meromorphicaly to $U$.
 Assume now that $\lambda = -\frac{1}{\ell} \in \mathbb
 Q_-$.  Recall that $\eta=-\frac{1}{2}\frac{dF}{F}$
 satisfies $d\Omega= \eta \wedge \Omega$ and $d\eta=0$. If we
 introduce $\tilde\eta_0 = \frac{d(gxy)}{gxy}$ then we have $d\Omega=
 \tilde\eta_0 \wedge \Omega$ and therefore $(\eta - \tilde\eta_0 )
 \wedge\Omega=0$ so that $(\eta - \tilde\eta_0 )\wedge
 (\frac{dy}{y} - \lambda \frac{dx}{x})=0$ and
 then $\eta= \tilde\eta_0 + H.(\frac{dy}{y} - \lambda \frac{dx}{x}) $ for some
 meromorphic function $H$ in $U_0:=U\setminus\{y=0\}$. Since $\eta$ and
 $\tilde\eta_0$ are closed we conclude that
 $d(H.(\frac{dy}{y} - \lambda \frac{dx}{x}))=0$ in $U_0$.
 Write now $H=\sum\limits_{i,j \in \mathbb Z} H_{ij} x^i y ^j$ in
 Laurent series in a small bidisc around the origin. We obtain from
 the last equation that $(i + \lambda j) H_{ij}=0, \, \forall i,j \in \mathbb
 Z$ (for $\lambda \notin \mathbb Q$ this implies, again, that $H=H_{00}$ is
 constant).  Thus we  have $\Omega \wedge d(xy^\ell)=0$ and also
 $F=\vr(xy^\ell)$ for some function $\vr(z)=\sum\limits_{t\in
 \mathbb Z} \vr_t z^t$ defined in a punctured disc around the
 origin. Nevertheless, the function $F$ is meromorphic along the
 axis $\{y=0\}$ and therefore $\vr$ extends  to the
 origin $0 \in \bc$ as a meromorphic function and thus $F$ extends  to a neighborhood of the origin
 as  $F=\vr(xy^\ell)$.

\vglue.1in

(ii) In the nondegenerate nonlinearizable case we can write $ S :
\{y=0\}$ and $\Omega = g\, \Om_{1,\ell} =g( y\,dx + \ell x(1 +
\frac{\sqrt{-1}}{2\pi}\, xy^\ell)dy)$ for some holomorphic
nonvanishing function $g$ on $U$. Define $\tilde\eta_0 =
\frac{d(gx^{2} y^{\ell+1})}{gx^{2} y^{\ell+1}}$. As above we
conclude that $\eta = \tilde\eta_0 + H. (n \frac{dx}{x^{2}y^\ell}
+ m \frac{dy}{x y^{\ell+1}} + \frac{m \sqrt{-1}}{2 \pi}
\frac{dy}{y})$ for some meromorphic function $H$ in $U_0$ such
that $dH\wedge (n \frac{dx}{x^{2}y^\ell} + \ell \frac{dy}{x
y^{\ell+1}} + \frac{\ell \sqrt{-1}}{2 \pi} \frac{dy}{y})=0$. In
other words, $H$ is a meromorphic first integral in $U_0$ for the
foliation $\fa$. This implies that $H$ is constant. In order to
see this it is enough to use Laurent series as above.
Alternatively one can argue as follows. If $H$ is not constant
then the holonomy map $h$ of the leaf $L_0\subset  S$ leaves
invariant a nonconstant meromorphic map (the restriction of the
first integral $H$ to a small transverse disc to $ S$). This
implies that $h$ is a map with finite orbits and indeed $h$ is
periodic. Nevertheless this is never the case of the holonomy map
of the separatrix $\{y=0\}$ of the foliation $\Omega_{1,\ell}$.
Thus the only possibility is that $H$ is constant.

\vglue.1in

(iii) In the saddle-node case we can write $\Omega=g\, \Omega_{(2)}=
g(xdy - y^{2}dx)$ for some holomorphic nonvanishing function $g$
in $U$. Defining $\tilde\eta_0 = \frac{d(gxy^{2})}{gxy^{2}}$ and
proceeding as above we conclude that $\eta= \tilde\eta_0 +
H.(\frac{dy}{y^{2}} - \frac{dx}{x})$ for some meromorphic function
$H$ in $U_0= U \setminus \{x=0\}$ such that $dH\wedge
(\frac{dy}{y^{2}} - \frac{dx}{x})=0$, i.e., $H$ is a meromorphic
first integral for the saddle-node in $U_0$. A similar
argumentation as above, either with Laurent series or with
holonomy arguments, shows that $H$ must be constant.

\vglue.2in We have therefore proved that in all cases $\eta=
\tilde\eta_0 + H.\omega$ for some meromorphic function $H$ in $U$
and some meromorphic closed one-form $\omega$ in $U$. Moreover,
$H$ is constant except in the resonant case. This shows that
$\eta=-\frac12\frac{dF}{F}$ extends  to $U$ as a meromorphic one-form and
therefore also $F$ extends  to $U$ as a meromorphic function, the lemma is
proved.
\end{proof}

\begin{Lemma}
\label{Lemma:xiextends}  Fix a separatrix $\Lambda \subset
\sep(\fa,U)$ which is not a central manifold, in case the origin
is a saddle-node. Let $\eta$ be a meromorphic one-form in $U$ and
$\xi$ be a meromorphic one-form in $U \setminus
[\sep(\fa,U)\setminus \Lambda]$ such that in $U \setminus
\sep(\fa,U)$ the one-forms $\Omega, \eta, \xi$ define a projective
triple. Then  $\xi$ extends as a meromorphic one-form to  $U$.
\end{Lemma}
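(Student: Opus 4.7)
The approach parallels the proof of Lemma~\ref{Lemma:noresonantextends}, the new ingredients being the classification provided by Remark~\ref{Remark:Touzet}(ii) and the extension result of Lemma~\ref{Lemma:ellextension}. When the origin is a nondegenerate nonresonant singularity, Lemma~\ref{Lemma:noresonantextends} already yields the conclusion, since the hypotheses imposed there on $\xi$ are weaker than ours (here $\xi$ is given on a larger open set, and its restriction to $U\setminus\sep(\fa,U)$ still defines a true projective triple with $\Omega$ and $\eta$). Hence we may assume that the singularity is either a nondegenerate resonant singularity ($\lambda\in\bq_-$) or a saddle-node; in the latter case the hypothesis that $\Lambda$ is not a central manifold forces $\Lambda$ to be the strong separatrix.

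Since $(\Omega,\eta,\xi)$ defines a projective triple on a neighborhood of $\Lambda\setminus\{0\}$, Remark~\ref{Remark:Touzet}(ii) applies and the analytic type of $\fa$ is determined by the classifying holonomy along $\Lambda$. As in the proof of Lemma~\ref{Lemma:ellextension}, we may therefore choose coordinates on $U$ in which $\Omega$ takes one of the three model forms: $g(xdy-\lambda y\,dx)$ (linearizable), $g\,\Omega_{1,\ell}$ (nonlinearizable resonant), or $g\,\Omega_{(2)}$ (saddle-node), with $g$ holomorphic and nonvanishing. In each of these three cases we define explicitly a closed meromorphic one-form $\eta_0=\frac{d\Phi}{\Phi}$, with $\Phi$ equal to $gxy$, $gx^{2}y^{\ell+1}$, or $gxy^{2}$ respectively; a direct computation yields $d\Omega=\eta_0\wedge\Omega$ on $U$, so that $(\Omega,\eta_0,0)$ is a meromorphic projective triple on $U$.

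Since both $\eta$ and $\eta_0$ are meromorphic on $U$ and satisfy $d\Omega=\eta\wedge\Omega=\eta_0\wedge\Omega$, we have $\eta-\eta_0=h\,\Omega$ for some meromorphic function $h$ on $U$. Applying Proposition~\ref{Proposition:modificationforms}(i) to the triple $(\Omega,\eta_0,0)$ with multiplier $\equiv 1$ and correction $h$ produces a new meromorphic projective triple $(\Omega,\eta,\xi_1)$ on $U$, where
$$
\xi_1 = -dh - h\,\eta_0 - \tfrac{h^2}{2}\,\Omega.
$$
The original triple $(\Omega,\eta,\xi)$ and the new triple $(\Omega,\eta,\xi_1)$ share the first two forms, so by Lemma~\ref{Lemma:ell} there exists a meromorphic function $F$ on $U\setminus[\sep(\fa,U)\setminus\Lambda]$ satisfying $\xi=\xi_1+F\,\Omega$ together with $d\Omega=-\tfrac{1}{2}\frac{dF}{F}\wedge\Omega$.

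The last step is to apply Lemma~\ref{Lemma:ellextension} to $F$ with $S=\Lambda$: its hypotheses are precisely matched, and we conclude that $F$ extends meromorphically to all of $U$. Hence $\xi=\xi_1+F\,\Omega$ extends as a meromorphic one-form to $U$, as required. The main obstacle is the reduction step: one needs Remark~\ref{Remark:Touzet}(ii) to put $\Omega$ into one of the three normal forms, which rests on Touzet's classification of transversely projective resonant and saddle-node germs via the holonomy of a separatrix. Once this reduction is in place the remainder of the argument is mechanical, consisting of the standard manipulations of projective triples and the invocation of Lemma~\ref{Lemma:ellextension}.
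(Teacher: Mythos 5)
There is a genuine gap at the reduction step, and you have in fact identified it yourself as "the main obstacle": you claim that Remark~\ref{Remark:Touzet}(ii) lets you put $\Omega$ into one of the three model forms $g(xdy-\lambda y\,dx)$, $g\,\Omega_{1,\ell}$, $g\,\Omega_{(2)}$ \emph{before} any function $F$ has been produced. But Remark~\ref{Remark:Touzet}(ii) only says that $\fa$ is a meromorphic pull-back of a Riccati foliation; it does not by itself pin down the analytic type. In the proof of Lemma~\ref{Lemma:ellextension} the three models are \emph{derived from the hypothesis that an $F$ with $d\Omega=-\frac12\frac{dF}{F}\wedge\Omega$ already exists}: it is the closed one-form $-\frac12\frac{dF}{F}$ and its residue along $S$ that force, via Lemma 3.1 of \cite{Scardua1} and Lemma~\ref{Lemma:linearization}, the holonomy of $S$ to be either linearizable or a ramified homography, and hence the germ to be one of the three models (in particular this is what forces $k=1$ in $\Omega_{k,\ell}$ and $t=1$ in the saddle-node case; a general Riccati pull-back need not satisfy this). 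Your argument is therefore circular: you need the normal form to build $\xi_1=-dh-h\eta_0-\frac{h^2}{2}\Omega$, and you only obtain an $F$ (as $\xi-\xi_1=F\Omega$) after $\xi_1$ exists.

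The paper breaks this circle differently in the nonlinearizable resonant and saddle-node cases: Touzet's theorem is used to produce a meromorphic projective triple $(\Omega,\eta,\xi')$ on a \emph{full neighborhood of the origin} (coming from the Riccati pull-back), so that Lemma~\ref{Lemma:ell} gives $\xi=\xi'+\ell\,\Omega$ with $d\Omega=-\frac12\frac{d\ell}{\ell}\wedge\Omega$; then either $\ell\equiv 0$ and $\xi=\xi'$ extends, or Lemma~\ref{Lemma:ellextension} applied to $\ell$ simultaneously yields the normal form and the meromorphic extension of $\ell$, hence of $\xi$. Your explicit $\eta_0=\frac{d\Phi}{\Phi}$, $\xi_1$ construction is exactly what the paper does, but only in the resonant \emph{analytically linearizable} subcase, where the normal form is available without appeal to $F$. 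The rest of your architecture (the nonresonant case via Lemma~\ref{Lemma:noresonantextends}, the use of Lemma~\ref{Lemma:ell} and Proposition~\ref{Proposition:modificationforms}, and the final appeal to Lemma~\ref{Lemma:ellextension}) is sound; to repair the proof, replace the hand-built $\xi_1$ by the triple $\xi'$ furnished by the Riccati pull-back in the nonlinearizable cases.
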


\begin{proof} The proof is based in the preceding results and in Theorem
II.4.2 of \cite{Touzet} (see Remark~\ref{Remark:Touzet}). Let us
analyze what occurs case by case:
\vglue.1in
\noindent{\bf Nondegenerate singularity}:
 First assume that $\fa$ is nondegenerate and
nonresonant.  By  Lemma~\ref{Lemma:noresonantextends} above the
singularity is analytically linearizable and the one-form $\xi$
extends  to $U$ as a meromorphic one-form.
Now we consider the resonant case, i.e.,  $\Omega= g(xdy - \lambda ydx +...)$
with $\la = - \frac{n}{m} \in \mathbb Q_-$ and that the
singularity is not analytically linearizable. As we have seen in
Remark~\ref{Remark:Touzet},  $\fa$ is the pull-back of a Riccati
foliation on $\ov\bc \times \ov\bc$ by some meromorphic map
$\sigma\colon U ---> \ov\bc \times \ov\bc$ provided that there is
a  meromorphic projective triple $(\Om', \eta', \xi')$ in a
neighborhood $W$ of a separatrix $\Lambda \subset \sep(\fa,U)$.
 From our hypothesis such a projective triple is
given by the restrictions of $\Omega$ and $\eta$ to $U\setminus
[\sep(\fa,U)\setminus \Lambda]$ and by the one-form $\xi$. Thus we
conclude that $\fa$ is a meromorphic pull-back of a Riccati
foliation and in particular there is a one-form $\xi'$ defined in
a neighborhood $\tilde U$ of the origin such that $(\Omega, \eta,
\xi')$ is a projective triple in this neighborhood. This implies
that $\xi= \xi' + \ell. \Omega$ in $\tilde U$ for some meromorphic
function $\ell$ in $\tilde U$ such that $d\Omega = - \frac12
\frac{d\ell}{\ell}$ in $\tilde U$. Now we have two possibilities.
Either $\xi=\xi'$ in $\tilde U$ or $\ell \not \equiv 0$. In the
first case $\xi$ extends  to $U$ as a meromorphic one-form,  $\xi=\xi'$. In
the second case we apply Lemma~\ref{Lemma:ellextension} above in
order to conclude that the singularity is analytically
normalizable and $\ell$ extends as a meromorphic function to $U$.
Finally,  suppose  the singularity is resonant analytically
linearizable, that means  $\fa$ is given in $U$ by $\Om = g\big(xdy +
\frac{n}{m}\,ydx\big)$ where $n,m \in \bn$ and $g$ is a
meromorphic function in $U$. In this case as above we define
$\eta_0 = \frac{dg}{g} + \frac{dx}{x} + \frac{dy}{y}$\,, write
$\eta = \eta_0 + h\Om$ and define $\xi_0=0$, \, $\xi_1 =
\xi_0-dh-h\eta_0 - \frac{h^2}{2}\,\Om = -dh - h\eta_0 -
\frac{h^2}{2}\,\Om$. Now we have $\xi = \xi_1 + \ell\Om$ for some
meromorphic function $\ell$ in $U^*$. In this case we have from
$d\ell = -\frac12\,\frac{d\ell}{\ell} \wedge \Om$ that
$\ell(gxy)^2 = [\vr(x^ny^m)]^2$ for some meromorphic function
$\vr(z)$ defined in a punctured neighborhood of the origin $0 \in
\bc$. In particular we conclude that since $\xi$ extends
 to some separatrix $\{x=0\}$ or $\{y=0\}$ as a meromorphic one-form then it
extends  to $U$ as a meromorphic one-form.

\vglue.2in
\noindent{\bf Saddle-node case}:
Finally, we assume that the origin is a saddle-node. We
write $\Om = g[y^{t+1}dx - (x(1+\la y^t)+\dots)dy]$ for some
holomorphic nonvanishing function $g$ in $U$.  Again by
Remark~\ref{Remark:Touzet}  there exists a meromorphic projective
triple $(\Om', \eta', \xi')$ for $\fa$ in $U$  which is given by a
meromorphic pull-back of a Riccati foliation projective triple. We
can assume that $\eta' = \eta$ and therefore $\xi = \xi ' + \ell
\Omega$ where $\ell$ is a meromorphic function in $U^*$ such that
$d\Omega= -\frac{1}{2} \frac{d\ell}{\ell}\wedge \Omega$. There are
two cases: If $\ell\equiv 0$ then $\xi$ extends as $\xi'$ to $U$.
Assume that $\ell\not \equiv 0$. In this case by
Lemma~\ref{Lemma:ellextension} the singularity is analytically
conjugated to $\Omega_{(t)}$ and the function $\ell$ extends
 to $U$ as a meromorphic function. Thus $\xi$ extends as a meromorphic
one-form to $U$. \end{proof}

\begin{Lemma}[noninvariant divisor]
\label{Lemma:noninvariantextension} Let be given a holomorphic
foliation $\fa$ on a complex manifold  $M$. Suppose that $\fa$ is
given by a meromorphic integrable one-form $\Omega$ which admits a
meromorphic one-form $\eta$ on $M$ such that $d \Omega= \eta \wedge
\Omega$. If $\fa$ is transversely projective in $M\setminus \Lambda$
for  some {\it noninvariant}  irreducible analytic subset $\Lambda
\subset M$ of codimension one then $\fa$ is transversely projective
in $M$.
\end{Lemma}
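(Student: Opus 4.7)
My plan is to extend $\xi$ meromorphically across $\Lambda$, where $(\Om,\eta,\xi)$ is a true meromorphic projective triple on $M\setminus\Lambda$ representing the transverse projective structure (which exists by Proposition~\ref{Proposition:forms} and the hypotheses). Once $\xi$ is extended, relations (P1)--(P3) persist on $M$ by analytic continuation, and trueness extends across any generic point $p\in\Lambda$ by matching the extended $(\Om,\eta,\xi)$ against the standard local holomorphic triple $\Om_0=dx,\ \eta_0=0,\ \xi_0=0$ of a trivial submersion via modifications (M1)--(M3); Proposition~\ref{Proposition:true} then delivers the sought projective structure on $M$.

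Since $\Lambda$ is irreducible of codimension one and \emph{noninvariant} by $\fa$, the set $\Lambda_0\subset\Lambda$ of smooth points $p$ where $\fa$ is regular and transverse to $\Lambda$ and where $\Om$ is holomorphic non-vanishing and $\eta$ is holomorphic is open and dense in $\Lambda$, while $\Lambda\setminus\Lambda_0$ is a proper analytic subset, hence of codimension $\geq 2$ in $M$. I first extend $\xi$ across $\Lambda_0$ by a local argument, and then invoke Levi's theorem (\cite{Siu}) to cross the remaining codimension-$\geq 2$ locus. Fix $p\in\Lambda_0$ and coordinates $(x,y)$ with $\Om=g\,dx$ ($g$ holomorphic non-vanishing) and $\Lambda=\{y=0\}$. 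Equation (P1) forces $\eta=\alpha\,dx+(\partial_y g/g)\,dy$ with $\alpha$ meromorphic on $M$ near $p$, and writing $\xi=A\,dx+B\,dy$ equation (P2) solves explicitly
\[
B=\frac{1}{g}\!\left(\partial_x\!\left(\frac{\partial_y g}{g}\right)-\partial_y\alpha\right),
\]
which is already meromorphic on a neighborhood of $p$. Equation (P3) then reduces to a first-order linear ODE in $y$,
\[
\partial_y(gA)\;=\;R(x,y)\;:=\;g\,(\partial_x B+\alpha B),
\]
whose right-hand side $R$ is meromorphic on $M$ near $p$.

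To extend $A$ meromorphically across $\{y=0\}$, I expand $R=\sum_{n\ge -N}R_n(x)y^n$ in Laurent series in $y$, so a general antiderivative in $y$ has the form $\sum_{n\ne -1}\frac{R_n(x)}{n+1}y^{n+1}+R_{-1}(x)\log y+c(x)$. Since the existing $A$ is single-valued meromorphic on $M\setminus\Lambda$, its monodromy around a small loop encircling $\{y=0\}$ must be trivial, which forces $R_{-1}\equiv 0$; the function $c(x)$ is then determined off $\{y=0\}$ as the difference of two meromorphic functions restricted to any slice $\{y=y_0\ne 0\}$, hence is meromorphic in $x$. Consequently $gA$, and therefore $A$, extends meromorphically to a neighborhood of $p$, completing the extension of $\xi$ across $\Lambda_0$. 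I expect the delicate point to be precisely this vanishing of $R_{-1}$, which crucially uses the noninvariance of $\Lambda$: it both provides the transverse regular model in which the computation is performed and supplies the loops in $M\setminus\Lambda$ that detect logarithmic monodromy. The trueness of the extended triple at each point of $\Lambda_0$ then follows from the explicit local holomorphic model of a trivial submersion, and the remaining codimension-$\geq 2$ points are handled automatically by Levi's theorem.
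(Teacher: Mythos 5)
Your proposal is correct in substance and follows the same overall strategy as the paper --- localize at a generic point of $\Lambda$ where $\fa$ is regular and transverse to $\Lambda$, put the triple in normal form with respect to a local first integral, and show the structure crosses the transversal --- but the mechanism you use for the key extension step is genuinely different. The paper's proof never integrates the equations directly: it invokes Lemma~\ref{Lemma:ell} to write the given $\xi$ as the ``standard'' solution $-\frac{1}{g}\bigl[dh+\frac{h^2}{2}dy\bigr]$ plus $\ell\,\Om$ with $d(\sqrt{\ell}\,g\,dy)=0$, so that $\sqrt{\ell}\,g=\vr(y)$ is a function of the first integral alone; since $\Lambda$ is a transversal, every nearby leaf meets $M\setminus\Lambda$, hence $\vr$ is already defined on the full leaf space and $\xi$ extends. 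You instead solve (P1)--(P3) as an explicit first-order linear ODE $\po_y(gA)=R$ along the leaves and kill the logarithmic term by single-valuedness of $A$ around $\Lambda$. Both arguments are sound, and yours is more elementary in that it bypasses Lemma~\ref{Lemma:ell}; the paper's is shorter because that lemma packages the entire ambiguity into the single closed-form constraint.

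Two soft spots in your write-up, neither fatal. First, your set $\Lambda_0$ is required to avoid the poles and zeros of $\Om$ and the poles of $\eta$; if $\Lambda$ happens to be a component of $(\Om)_0$, $(\Om)_\infty$ or $(\eta)_\infty$ this set is empty. The paper disposes of this at the outset by replacing $\Om$ by $g_1\Om$ and $\eta$ by $\eta+\frac{dg_1}{g_1}$ so that the poles are in general position with respect to the noninvariant $\Lambda$; alternatively your Laurent-series argument tolerates meromorphic $g$ and $\al$ with poles along $\{y=0\}$, so you may simply weaken the definition of $\Lambda_0$. Second, the final ``trueness'' step is stated imprecisely: by Lemma~\ref{Lemma:ell} the extended triple is a modification (M1)--(M3) of $(dx,0,\vr^2(x)\,dx)$, not of $(dx,0,0)$, and one must still see that this base triple is holomorphic (equivalently that $\vr^2$ has no pole at $x(p)$) or argue otherwise. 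The cleanest repair is the observation implicit in the paper's last line: every local projective chart of $\fa|_{U\setminus\Lambda}$ factors through the first integral $x$, and $x(U\setminus\Lambda)=x(U)$ because $\Lambda$ is a transversal, so the charts defining the structure off $\Lambda$ already cover the whole local leaf space and the transverse structure extends for free.
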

\begin{proof} Our argumentation is local, {\it i.e.}, we consider a small neighborhood $U$ of a generic point $q\in \Lambda$ where $\fa$ is transverse to $\Lambda$. Thus, since $\Lambda$ is not invariant by $\fa$, performing changes as $\Omega^\prime = g_1 \Omega$ and $\eta ^\prime = \eta + \frac{dg_1}{g_1}$  we can assume that $\Omega$ and $\eta$ have poles in general position with respect to $\Lambda$ in $U$. The existence of a projective transverse structure for $\fa$ off $\Lambda$ then gives a meromorphic one-form $\xi$ in $M \setminus \Lambda$ such $(\Omega, \eta, \xi)$ is a true projective triple in $M \setminus \Lambda$. For $U$ small enough we can assume that for suitable local coordinates $(x,y)=(x_1,...,x_n,y)\in U$ we have $\Lambda \cap U = \{x_1=0\}$ and also
\[
\Omega = gdy, \eta = \frac{dg}{g} + h dy
\]
for some holomorphic function $g, h \colon U \to \bc$ with $1/g$
also holomorphic in $U$. Then we have
\[
\xi=-\frac{1}{g}\big[dh + \frac{h^2}{2} dy\big]
\]
where
\[
d( \sqrt {\ell} g dy ) =0
\]
Thus, $\sqrt{\ell} g = \vr (y)$ for some meromorphic function
$\vr(y)$ defined for $x_1 \ne 0$ and therefore for $x_1=0$. This
shows that $\xi$ extends  to $W$ as a {\it holomorphic one-form} and then  the
projective structure extends to $U$. This shows that the transverse
structure extends to $\Lambda$.
\end{proof}

\noindent Summarizing the above discussion we obtain the following
proposition:

\begin{Proposition}
\label{Proposition:technical}
 Let $\mathcal F$ a holomorphic foliation in a neighborhood $U$ of the origin
 $0 \in \bc^2$ with an isolated  singularity at the origin. Suppose
 that $ \fa$ is transversely projective in $U\setminus \sep(\mathcal \fa,U)$ and let $(\Omega, \eta, \xi)$ be a
 meromorphic triple in $U\setminus \sep(\fa, U)$ with $\Omega$ holomorphic in $U$, $\eta$ meromorphic in $U$ and
 $\xi$ meromorphic in $U\setminus \sep(\fa,U)$.
Then the one-form $\xi$ is infinitesimally extensible with respect
to $\mathcal F$.
\end{Proposition}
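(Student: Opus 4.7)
The plan is to apply the resolution morphism $\pi\colon\tilde U\to U$ of $\fa$ at $0\in\bc^2$, pull back the projective triple $(\Omega,\eta,\xi)$ to obtain a meromorphic triple $(\tilde\Omega,\tilde\eta,\tilde\xi)=(\pi^*\Omega,\pi^*\eta,\pi^*\xi)$ on $\tilde U\setminus \pi^{-1}(\sep(\fa,U))$, and then verify the extensibility condition at the two kinds of points appearing in the definition of infinitesimally extensible: (a) generic points of dicritical components of the exceptional divisor, and (b) irreducible singularities of $\tilde\fa$. Near any such point, I will first normalize $(\tilde\Omega,\tilde\eta,\tilde\xi)$ using the modification rules (M1)--(M3) of Proposition~\ref{Proposition:modificationforms} with $g=1/f$ (and $h=0$), where $f$ is a local equation of the pull-back divisor, so that the local representative of $\tilde\fa$ is a holomorphic one-form with isolated singularity in one of the normal forms (Reg), (Irr.1) or (Irr.2). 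The new $\xi$-component is $f\tilde\xi$, which differs from $\tilde\xi$ by multiplication by a meromorphic function, so meromorphic extensibility is preserved by the normalization.

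At a generic point $q$ on a dicritical component $D_j$, the foliation $\tilde\fa$ is regular and transverse to $D_j$, and $D_j$ is by definition not invariant by $\tilde\fa$. The normalized triple then satisfies the hypotheses of Lemma~\ref{Lemma:noninvariantextension} (noninvariant divisor): the one-form playing the role of $\xi$ extends meromorphically across $D_j$ in a neighborhood of $q$. Pulling back the equality along the normalization gives the meromorphic extension of $\tilde\xi$ itself, which is exactly what is required for extensibility at $q$ in the case (E.0) of the fundamental-domain definition.

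At an irreducible singularity $p$ of $\tilde\fa$, I distinguish two cases. If $p$ is nondegenerate nonresonant, then a fundamental domain at $p$ is $U_p\setminus\sep(\tilde\fa,U_p)$, which contains the set where $\tilde\xi$ is defined; Lemma~\ref{Lemma:noresonantextends} applies to the normalized triple and gives the meromorphic extension of $\tilde\xi$ to $U_p$. If $p$ is resonant or a saddle-node, a fundamental domain at $p$ has the form $(U_p\setminus\sep(\tilde\fa,U_p))\cup\tilde{\mathcal S}$ where $\tilde{\mathcal S}$ is a union of ring neighborhoods of loops along resonant separatrices (respectively, along the strong separatrix in the saddle-node case). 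Given any meromorphic extension of $\tilde\xi$ to such a fundamental domain, the form is defined on a full two-dimensional open neighborhood of a generic point of the distinguished separatrix $\Lambda$; combining this with the fact that $\tilde\xi$ is already meromorphic on the complement of $\sep(\tilde\fa,U_p)$, a Hartogs/Levi-type extension argument produces a meromorphic extension of $\tilde\xi$ to all of $U_p\setminus[\sep(\tilde\fa,U_p)\setminus\Lambda]$. Since $\Lambda$ is not a central manifold (it is either a resonant separatrix or the strong separatrix of the saddle-node), the hypotheses of Lemma~\ref{Lemma:xiextends} are met for the normalized triple, which upgrades the extension to a meromorphic one-form on all of $U_p$; undoing the normalization gives the desired meromorphic extension of $\tilde\xi$.

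The main obstacle in the argument is exactly the bridge between the abstract hypothesis \emph{any extension to a fundamental domain} and the hypothesis \emph{$\xi$ is meromorphic on $U\setminus[\sep(\fa,U)\setminus\Lambda]$} required by Lemma~\ref{Lemma:xiextends}: one has to use the ring-neighborhood part $\tilde{\mathcal S}$ of the fundamental domain to fill in a full bidisc transverse to $\Lambda$ at a generic point, and then Hartogs-extend to punch the extension along the whole of $\Lambda\setminus\{p\}$. Once this is done, the combination of Lemmas~\ref{Lemma:noresonantextends}, \ref{Lemma:xiextends} and~\ref{Lemma:noninvariantextension} covers all the points demanded by the definition of infinitesimal extensibility, and the proposition follows.
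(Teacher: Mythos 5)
Your proposal is correct and follows essentially the same route as the paper: the paper gives no separate proof, presenting the proposition as a summary of Lemmas~\ref{Lemma:noresonantextends}, \ref{Lemma:xiextends} and \ref{Lemma:noninvariantextension}, which is exactly the assembly you carry out (nonresonant nondegenerate singularities, resonant/saddle-node singularities via the distinguished separatrix, and dicritical components, respectively). Your explicit attention to the bridge between ``extension to a fundamental domain'' and the hypothesis of Lemma~\ref{Lemma:xiextends} --- using the ring neighborhood $\tilde{\mathcal S}$ plus a Levi/Hartogs argument to obtain $\tilde\xi$ meromorphic on $U_p\setminus[\sep(\tilde\fa,U_p)\setminus\Lambda]$ --- is a point the paper leaves implicit, and you handle it correctly.
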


From Proposition~\ref{Proposition:technical} and Theorem~\ref{Theorem:Globalization} we obtain:

\begin{Theorem}
\label{Theorem:projectiveextensible} Let $\mathcal F$ a holomorphic
foliation in a neighborhood $U$ of the origin
 $0 \in \bc^2$ with an isolated  singularity at the origin. Suppose
 that $ \fa$ is transversely projective in
 $U\setminus \sep(\mathcal \fa,U)$ and let $(\Omega, \eta, \xi)$ be a
 meromorphic triple in $U\setminus \sep(\fa, U)$ with $\Omega$ holomorphic
 in $U$, $\eta$ meromorphic in $U$ and
 $\xi$ meromorphic in $U\setminus \sep(\fa,U)$.
Then the one-form $\xi$ extends as a meromorphic one-form to a
neighborhood of the origin provided that it extends to some
fundamental domain of $\mathcal F$.
 \end{Theorem}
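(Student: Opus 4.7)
The plan is to assemble the theorem directly from the two principal ingredients already developed in the paper: Proposition~\ref{Proposition:technical} and the Globalization Theorem~\ref{Theorem:Globalization}. There is very little original work left to do here; the bulk of the effort has been spent in the preceding subsections, and what remains is to check that the hypotheses align and then chain the two statements together.

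First I would apply Proposition~\ref{Proposition:technical} to the meromorphic projective triple $(\Omega,\eta,\xi)$ in $U\setminus\sep(\mathcal{F},U)$. Under the hypotheses of the theorem, the conclusion of that proposition is precisely that $\xi$ is infinitesimally extensible with respect to $\mathcal F$: writing $\pi\colon \tilde U\to U$ for the resolution morphism, the pull-back $\tilde\xi=\pi^*\xi$ extends as a meromorphic one-form in a neighborhood of each irreducible singular point of $\tilde{\mathcal F}$ and at a generic point of each dicritical component of the exceptional divisor. This step is where the local analysis of Section~\ref{section:extensionirreducible} is absorbed---Lemmas~\ref{Lemma:noresonantextends}, \ref{Lemma:xiextends} and \ref{Lemma:noninvariantextension}, together with the Touzet classification recalled in Remark~\ref{Remark:Touzet}, cover each of the reduced local models (nondegenerate nonresonant, resonant, saddle-node, and noninvariant transverse divisor).

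Next I would invoke the Globalization Theorem~\ref{Theorem:Globalization}, which asserts precisely that any meromorphic $q$-form infinitesimally extensible with respect to $\fa$ in a sufficiently small neighborhood $U$ of $0\in\bc^2$ is already extensible in $U$. Unwinding the definition of extensibility from the Introduction, this is the statement that every extension of $\xi$ to a fundamental domain of $\mathcal{F}|_U$ prolongs as a meromorphic one-form to the whole of $U$.

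To close the argument I would feed in the remaining hypothesis of the theorem, namely that $\xi$ admits an extension to some fundamental domain of $\mathcal F$. Plugging this given extension into the conclusion of the Globalization Theorem yields the desired meromorphic extension of $\xi$ to a neighborhood of the origin. I do not anticipate any serious obstacle in this assembly; the only thing that deserves explicit attention is the bookkeeping around the notion of \emph{fundamental domain}---in particular, verifying that the ring neighborhoods of generating loops around each resonant separatrix appearing in the hypothesis are the same objects as those required by Proposition~\ref{Proposition:technical} and Theorem~\ref{Theorem:Globalization}. Once that compatibility is recorded, the proof is complete.
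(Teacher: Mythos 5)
Your proposal is correct and follows exactly the route the paper takes: the paper derives this theorem by combining Proposition~\ref{Proposition:technical} (which yields infinitesimal extensibility of $\xi$) with Theorem~\ref{Theorem:Globalization}, then feeding in the hypothesized extension to a fundamental domain. Your additional remark about checking the compatibility of the fundamental-domain notions is a reasonable piece of bookkeeping that the paper leaves implicit.
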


We recall that a germ of a foliation at the origin $0\in \bc^2$ is a
{\it generalized curve} if it  exhibits no saddle-node in its
resolution by blow-ups (\cite{C-LN-S}). The generalized curve is
{\it non-resonant} if each connected component of the invariant part
of the exceptional divisor contains some singularity of non-resonant
type. The inverse image of a fundamental domain of a non-resonant
generalized curve contains a fundamental of each singularity arising
in its resolution process. Therefore, from
Theorem~\ref{Theorem:projectiveextensible} we obtain:

\begin{Corollary}
\label{Corollary:projectiveextensible} Let $\mathcal F$ be a germ of
a non-resonant generalized curve at the origin $0\in \mathbb C^2$.
Suppose
 that $ \fa$ is transversely projective in
 $U\setminus \sep(\mathcal \fa,U)$ and let $(\Omega, \eta, \xi)$ be a
 meromorphic triple in $U\setminus \sep(\fa, U)$ with $\Omega$ holomorphic
 in $U$, $\eta$ meromorphic in $U$ and
 $\xi$ meromorphic in $U\setminus \sep(\fa,U)$.
Then the one-form $\xi$ extends to $U$  as a meromorphic one-form
 \end{Corollary}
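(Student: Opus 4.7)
The plan is to deduce the corollary as an immediate consequence of Theorem~\ref{Theorem:projectiveextensible}, whose only nontrivial hypothesis is that $\xi$ extends to a fundamental domain of $\fa$ at the origin. The entire work of the proof is therefore to observe that, in the non-resonant generalized curve setting, this hypothesis is satisfied automatically by the data already given.

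First I would feed the meromorphic triple $(\Omega,\eta,\xi)$ into Proposition~\ref{Proposition:technical}: since $\fa$ is assumed transversely projective on $U\setminus\sep(\fa,U)$ with $\Omega$ holomorphic in $U$ and $\eta$ meromorphic in $U$, the proposition yields that $\xi$ is infinitesimally extensible with respect to $\fa$. This provides the input required by the Globalization Theorem that underlies Theorem~\ref{Theorem:projectiveextensible}, namely extensibility at each irreducible singularity of $\tilde\fa$ and at generic points of each dicritical component of the exceptional divisor.

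Next I would invoke the non-resonant generalized curve hypothesis to verify the fundamental domain condition. The generalized curve assumption means no saddle-nodes appear in the resolution, so item (E.3) in the definition of fundamental domain is vacuous. The non-resonance assumption says that each connected component of the invariant part of the exceptional divisor carries some non-resonant irreducible singularity, where a clean fundamental domain of type (E.1) is available. Under these two conditions the remark preceding the corollary states precisely what is needed: the inverse image $\pi^{-1}(U\setminus\sep(\fa,U))$ contains a fundamental domain of $\tilde\fa$ at every singularity arising in the resolution, so that $U\setminus\sep(\fa,U)$ itself serves as a fundamental domain of $\fa$ at $0\in\bc^2$, without the need to adjoin any ring neighborhood along a loop on a separatrix. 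Consequently the given $\xi$ is by hypothesis already defined on such a fundamental domain.

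With both hypotheses of Theorem~\ref{Theorem:projectiveextensible} verified, the theorem produces the desired meromorphic extension of $\xi$ to all of $U$. The main point that would require careful justification is the claim of the previous paragraph: that in the non-resonant generalized curve case no resonant-separatrix ring neighborhood is actually needed in the fundamental domain. This is where the non-resonance hypothesis does its real work, since it is what allows the index-theoretic propagation of Proposition~\ref{Proposition:locallyextensible} and the extension Lemmas~\ref{Lemma:noresonantextends} and~\ref{Lemma:xiextends} to extend $\tilde\xi$ across each irreducible component of the invariant divisor starting from its non-resonant singular point, avoiding precisely the holonomy-based obstruction that would otherwise force ring neighborhoods into the definition.
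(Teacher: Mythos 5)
Your proposal follows the paper's own (one-sentence) argument exactly: reduce to Theorem~\ref{Theorem:projectiveextensible} and observe that for a non-resonant generalized curve the fundamental-domain hypothesis is automatic, since the absence of saddle-nodes and the presence of a non-resonant seed singularity on each invariant component of the exceptional divisor allow the extension to propagate without any obstruction from the ring neighborhoods. One minor caution: when $\fa$ does possess resonant separatrices, Definition~1 still formally requires the ring neighborhoods $\mathcal S$ in any fundamental domain, so the accurate phrasing is that $\xi$ automatically extends to $\mathcal S$ (precisely by the propagation mechanism you describe in your last paragraph), rather than that $U\setminus\sep(\fa,U)$ is itself a fundamental domain.
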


We believe that Theorem~\ref{Theorem:Globalization} might have other
applications. For instance, consider two germs of holomorphic vector
fields with same set of separatrices and holomorphically equivalent
in a neighborhood of the singularity minus the local separatrices.
In this situation,  Theorem~\ref{Theorem:Globalization} may be an
useful tool in the investigation of  the existence of a holomorphic
equivalence for the germs in terms of their associated projective
holonomy groups.

\bibliographystyle{amsalpha}

\vglue.1in

\begin{tabular}{ll}
C\'esar Camacho  & \qquad  Bruno Sc\'ardua\\
IMPA-Estrada D. Castorina, 110 & \qquad Inst. Matem\'atica\\
Jardim Bot\^anico  & \qquad Universidade Federal do Rio de Janeiro\\
Rio de Janeiro - RJ  & \qquad  Caixa Postal 68530\\
CEP. 22460-320   & \qquad 21.945-970 Rio de Janeiro-RJ\\
BRAZIL &  \qquad BRAZIL
\end{tabular}

\end{document}